\newtheorem{eg}{Example}[section] 
\newtheorem{theo}{Theorem}[section]
\newtheorem{cor}{Corollary}[section]
\newtheorem{lemma}{Lemma}[section]
\newtheorem{prop}{Proposition}[section]
\newtheorem{defi}{Definition}[section]
\newtheorem{Notation}{Notation}[section]
\newtheorem{remark}{Remark}[section]
\def\n{\mathbb{N}}
\def\z{\mathbb{Z}}
\def\c{\mathbb{C}}
\begin{document}


\title{Rack Representations and Connections with groups representations}

\author{JOS\'E~GREGORIO~RODR\'IGUEZ-NIETO\\Escuela de Matem\'aticas - Facultad de Ciencias,\\Universidad Nacional de Colombia,\\Carrera 65 N. 59A-110, Medell\'in-Colombia.\\e-mail:~\url{jgrodrig@unal.edu.co} \\\mbox{}\\ OLGA~PATRICIA~SALAZAR-D\'IAZ\\Escuela de Matem\'aticas - Facultad de Ciencias,\\Universidad Nacional de Colombia,\\Carrera 65 N. 59A-110, Medell\'in-Colombia.\\e-mail:~\url{opsalazard@unal.edu.co} \\\mbox{}\\ RICARDO~ESTEBAN~VALLEJOS-CIFUENTES\\Instituto de Matem\'aticas\\Universidad de Antioquia,\\Calle 67 No. 53 - 108, Medell\'in-Colombia.\\e-mail:~\url{ricardo.vallejos@udea.edu.co} \\\mbox{}\\ RA\'UL~VEL\'ASQUEZ\\Instituto de Matem\'aticas\\Universidad de Antioquia,\\Calle 67 No. 53 - 108, Medell\'in-Colombia.\\e-mail:~\url{raul.velasquez@udea.edu.co}}

\maketitle
\begin{abstract}
In this paper we study some algebraic properties of the rack structure as well as the representation theory of it, following the ideas given by M. Elhamdadi and E. M. Moutuou in \cite{Elhamdadi}. We establish a correspondence between the irreducible strong representations of a finite and connected rack with the irreducible representation of its finite enveloping group which allows to use techniques of the latter topic in the other setting.
\end{abstract}

\textbf{keyword}: Racks, Quandles, Rack Representations, Group Representations.


\section{Introduction}
The relevancy of representation theory in the development and approach of certain problems concerning to algebraic objects is very well known, not only in pure mathematics, but also in other branches of science like physics, chemistry, dynamics, etc. In particular, such theory has been influential and determinant in the advancement of group theory and its connection with geometry through the concept of orbifolds. Because of that, we have many more results in that area than in the other ones. Besides, we have a non-associative emerging structure, called \textit{racks}, that, although it is not so new, it plays an important role in the coquecigrue problem, see \cite{MR2286884} and \cite{MR3578405}, and in the solution of the Yang-Baxter equation, see \cite{MR1722951} and \cite{MR3868941}. In $2018$ M. Elhamdadi and E. M. Moutuou in \cite{Elhamdadi} introduced the definition of representations for this object.

In the early  1980s, Joyce \cite{Joyce2} and Mateev \cite{matveev1982distributive} introduced, independently, what Joyce called a \textit{quandle}, in their aims to construct a complete knot invariant. Later, in 1993, Fenn and Rourke introduced in \cite{fFenn} the concept of \textit{rack}, as a generalization of quandle. A \textit{rack} is a set $X$ endowed with a non-associative binary operation $\vartriangleright$, such that, for all $ x\in X$, the left multiplication map $L_{x}: X\longrightarrow X$ defined by $L_{x}(y):= x \vartriangleright y$ is a bijective function, and for all $ x,y,z\in X$, we have 
\[x\vartriangleright (y\vartriangleright z) =(x\vartriangleright y)\vartriangleright (x\vartriangleright z).\]

In the case that, for all $x \in X$, we have $x \vartriangleright x = x$, then the rack $(X,\vartriangleright )$ is called a \textit{quandle}. The \textit{trivial rack} is the rack on which $x\vartriangleright y=y$, for all $x,y\in X$. The concepts of \textit{sub rack}, \textit{subquabdle}, \textit{rack homomorphism} and \textit{rack isomorphism} are given in the same fashion as in the case of groups. For every pair of elements $x,y$ of a rack $X$, we write $x\vartriangleright^{-1} y$ for the inverse function $L^{-1} _{x}(y)$. If $(X,\vartriangleright)$ satisfies that $L_{x} \circ L_{x}=id$,  for all $x \in X$, then $X$ is called an \textit{involutive quandle} or an \textit{involutive rack}, respectively. Observe that, if $X$ is an involutive quandle then we have  $ x \vartriangleright(x\vartriangleright y) = y$; \ \ for all $x,y \in X$.

Let $G$ be a group. Then, there are two natural quandles associated to $G$, they are called the \textit{conjugation} and the \textit{core} quandles. The set $G$ with the operation $g \vartriangleright h := ghg^{-1}$ is the \textit{conjugacy quandle}, and it is denoted by \textit{Conj(G)}. The set $G$ endowed  with the binary operation $g \vartriangleright h := gh^{-1}g$ is the \textit{core quandle}, and denoted by  \textit{Core(G)}, see \cite{Joyce} and \cite{Algebraofknots2} for more details. 

Due to the diverse range of applications that both racks and quandles have, it is important to study these objects in a pure way as algebraic entities in their own right, rather than solely based on their connections with other branches of mathematics, several researchers have adopted this approach, and even some recent investigations have begun with the study of the \textit{rack representation theory} \cite{Elhamdadi}. In this work, we study rack and quandle structures from a purely algebraic perspective, with a special focus on rack representation theory.
Elhamdadi and Moutuou, introduced rack representation theory in \cite{Elhamdadi} and established some general properties.  Additionally, they introduce the concept of \textit{strong representation} and stated the following theorem:   \textit{“Theorem 9.11: Every strong irreducible representation of a finite connected involutive rack is one–dimensional”}.
Unfortunately, we have found that this theorem is not true. We show a relation between strong irreducible representations of finite connected racks and irreducible representations of finite groups, through this relation we construct an infinite family of counterexamples to this theorem, see Example \ref{Example 2.10}. Specifically, we prove the next result.

    \textit{\textbf{Theorem 6.1.} Let $X$ be a finite-connected rack and let $(V,\rho)$ be a strong representation of $X$. Then $\rho$ induces a  representation $\bar{\rho}: \overline{G_{X}} \longrightarrow GL(V)$ of the finite enveloping group $\overline{G_{X}}$ such that $\bar{\rho}_{g_{x} \langle g_{x_{0}}^{n}\rangle} := \rho_{x}$ for all $x \in X$. Furthermore, if $\rho$ is an irreducible rack representation, then $\bar{\rho}$ is an irreducible group representation.}\\

Analogous to the theory of group representations, there exists a concept of equivalence between rack representations. The relation between strong irreducible representations of finite connected racks and irreducible group representations establish in Theorem \ref{Theorem 2.4} ensures that this equivalence relation is preserved through such a connection, as we show in Theorem \ref{Theorem 2.5}.\\

   \textit{\textbf{Theorem 6.2.} Let $X$ be a finite-connected rack, and let $(V,\rho)$  and $(V,\phi)$  be strong representations of $X$ such that $\rho \sim \phi$. Then, the induced group representations $\bar{\rho}: \overline{G_{X}} \longrightarrow GL(V)$  and $\bar{\phi}: \overline{G_{X}} \longrightarrow GL(V')$ are also equivalent.}\\

We also prove that the reciprocal is true for the case in which the center of $\overline{G_{X}}$ is trivial, obtaining a bijective correspondence between the equivalence class of a strong representation of finite connected racks and the equivalence class of a group representation of the finite enveloping group $\overline{G_{X}}$ with trivial center.                

\section{Groups related to a rack}\label{Section 1.2} 

Let us begin this section with the following definition.

\begin{defi}
Let $X$ be a rack. Let $F(X)$ be the free group on the set $X$ and let $N$ be the normal subgroup generated by the words of the form $(x \vartriangleright y)xy^{-1}x^{-1}$ where $x,y \in X$. We define the \textit{\textbf{associated group}}, denoted by $As(X)$, to be the quotient group $F(X)/N$, i.e. 

\begin{center}
    $As(X) = F(X) /\langle x \vartriangleright y=xyx^{-1} ,  \ \ x,y \in X \rangle $
\end{center}
 This group is also called the \textit{\textbf{enveloping group}}.
 \end{defi}
 There is an interesting functorial relation between racks and some set-theoretic solutions to the Yang-Baxter equation, see \cite{lebed_vendramin_2019} for more details. Under the context of Yang-Baxter equation the associated group $As(X)$ is the same \textit{structure group} defined in \cite{MR1722951}.\\
 Observe that,  we have two onto maps, the inclusion map $\iota: X \hookrightarrow F(X)$ defined by $\iota(x) :=x$ for all $x \in X$ and the canonical homomorphism $\pi:  F(X) \longrightarrow As(X)$  defined by $\pi(x):= \bar{x}$ for all $x \in X$. Thus, we have a natural onto map, $\eta_{X}: X \longrightarrow As(X)$ defined by $\eta_{X}(x):= (\pi \circ i)(x) = \bar{x}$. This map is not necessarily injective, as we will see in Example \ref{Example 1.12}. 

\begin{remark}
  We will write $g_{x}$ instead of $\eta_{X}(x)$, that is,  $g_{x}:=\eta_{X}(x)$. Hence, from the definition of $As(X)$, it has the presentation
\begin{align*}
    As(X)= \langle g_{x} , \ x \in X  \ | \ g_{x\vartriangleright y} = g_{x}g_{y}g_{x}^{-1} , \ x,y \in X \rangle. 
\end{align*}
Let $ z =  x \vartriangleright^{-1} y  \in X$, then $x \vartriangleright z = y$, thus we have $g_{y} = g_{x \vartriangleright z}= g_{x}g_{z}g_{x}^{-1}$. Thereby, we also have the relation $g_{x \vartriangleright^{-1} y} = g_{x}^{-1}g_{y}g_{x}$.
\end{remark}

In, \cite{Nosaka} and \cite{Joyce2}, there are two functors that are studied: $As$, from the category of racks into the category of groups, and $Conj$, from the category of groups into the category of racks. Moreover, it is proved that the functor $As$ is the left adjoint of the functor $Conj$. Namely, for any rack $X$ and any group $G$ we have

\begin{center}
    $Hom_{gr}(As(X),G) \cong Hom_{rack}(X, Conj(G))$.
\end{center}

This claim is proved in the following theorem.
	 
 \begin{theo}\label{Theorem 1.2} \textnormal{\textbf{(Universal property)}}\\
 Let $G$ be a group, $X$ be a rack and $\phi: X \longrightarrow Conj(G)$ be a rack homomorphism.  Then $\phi$ induces an unique group homomorphism (lifting of $\phi$) $\hat{\phi}: As(X) \longrightarrow G$ such that $\hat{\phi}(g_{x})=\phi(x)$ for all $x \in X$. Conversely, let $\psi: As(X) \longrightarrow G$ be a group homomorphism then  $\psi$ induces a rack homomorphism (projection of $\psi$) $\hat{\psi}: X \longrightarrow Conj(G)$ such that $\hat{\psi}(x)=\psi(g_{x})$ for all $x \in X$.
 \end{theo}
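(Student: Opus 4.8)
The plan is to derive both directions directly from the universal property of the free group together with the defining presentation of $As(X)$ recalled in the Remark.

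For the first assertion I would start by regarding $\phi$ merely as a function of sets $\phi\colon X\to G$, forgetting for a moment the quandle structure on $Conj(G)$. By the universal property of the free group $F(X)$, this set map extends to a unique group homomorphism $\tilde\phi\colon F(X)\to G$ with $\tilde\phi(x)=\phi(x)$ for every $x\in X$. The key step is to show that the normal subgroup $N=\langle (x\vartriangleright y)xy^{-1}x^{-1} : x,y\in X\rangle$ is contained in $\ker\tilde\phi$; since $\ker\tilde\phi$ is normal it suffices to test this on the generating words of $N$, and one computes
\[
\tilde\phi\bigl((x\vartriangleright y)\,x\,y^{-1}\,x^{-1}\bigr)=\phi(x\vartriangleright y)\,\phi(x)\,\phi(y)^{-1}\,\phi(x)^{-1},
\]
which equals the identity exactly when $\phi(x\vartriangleright y)=\phi(x)\phi(y)\phi(x)^{-1}$. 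But the right-hand side is precisely $\phi(x)\vartriangleright\phi(y)$ computed in $Conj(G)$, so this identity holds because $\phi$ is a rack homomorphism $X\to Conj(G)$. Hence $\tilde\phi$ descends to the quotient $F(X)/N=As(X)$, yielding $\hat\phi\colon As(X)\to G$ with $\hat\phi(g_{x})=\hat\phi(\pi(x))=\tilde\phi(x)=\phi(x)$. Uniqueness is immediate, since $As(X)$ is generated by $\{g_{x}:x\in X\}$ and a group homomorphism out of it is determined by its values on these generators.

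For the converse, given a group homomorphism $\psi\colon As(X)\to G$ I would simply set $\hat\psi(x):=\psi(g_{x})$ and verify the rack-homomorphism condition using the presentation of $As(X)$: namely
\[
\hat\psi(x\vartriangleright y)=\psi(g_{x\vartriangleright y})=\psi\bigl(g_{x}g_{y}g_{x}^{-1}\bigr)=\psi(g_{x})\psi(g_{y})\psi(g_{x})^{-1}=\hat\psi(x)\,\hat\psi(y)\,\hat\psi(x)^{-1},
\]
which is exactly $\hat\psi(x)\vartriangleright\hat\psi(y)$ in $Conj(G)$. Thus $\hat\psi\colon X\to Conj(G)$ is a rack homomorphism with $\hat\psi(x)=\psi(g_{x})$, as required.

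There is no serious obstacle here; the only point that demands a little care is the translation in the first direction, recognizing that ``$N\subseteq\ker\tilde\phi$'' is literally a restatement of the fact that $\phi$ intertwines the operation $\vartriangleright$ of $X$ with conjugation in $G$, together with keeping the bookkeeping straight among the set map $\tilde\phi$ on $F(X)$, its descent $\hat\phi$ to $As(X)$, and the composite $\eta_{X}=\pi\circ\iota$. One could alternatively phrase the argument abstractly as the content of the adjunction $Hom_{gr}(As(X),G)\cong Hom_{rack}(X,Conj(G))$ already mentioned, but carrying out the explicit generators-and-relations computation gives the cleanest self-contained proof.
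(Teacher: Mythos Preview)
Your proof is correct and follows essentially the same approach as the paper's own proof: both extend $\phi$ to $F(X)$ via the universal property of free groups, verify that the generators of $N$ lie in the kernel using the rack-homomorphism condition, descend to $As(X)$, and handle the converse by direct computation with the relation $g_{x\vartriangleright y}=g_xg_yg_x^{-1}$. The only cosmetic differences are notation ($\tilde\phi$ versus the paper's $f_\phi$) and that you make the uniqueness argument slightly more explicit.
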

 
 \begin{proof} Let $\phi: X \longrightarrow Conj(G)$ be a rack homomorphism. Since $G= Conj(G)$ as sets, then the function $\phi: X \longrightarrow G$ makes sense. From the universal property of free groups, there exists an unique group homomorphism $f_{\phi}: F(X) \longrightarrow G$ such that $f_{\phi} (x) = \phi(x)$ for all $x \in X$. In $G$, we have that $\phi(x) \ast \phi(z)= \phi(x)\phi(z)\phi(x)^{-1} $, for all $x,z \in X$. Hence, 
  \begin{align*}
    f_{\phi}((x \vartriangleright z)xz^{-1}x^{-1}) &= f_{\phi}(x \vartriangleright z)f_{\phi}(x)[f_{\phi}(z)]^{-1}[f_{\phi}(x)]^{-1}\\
    &=\phi(x \vartriangleright z)\phi(x)[\phi(z)]^{-1}[\phi(x)]^{-1}\\
    &=[\phi(x) \ast \phi(z)]\phi(x)[\phi(z)]^{-1}[\phi(x)]^{-1}\\
    &=1.
  \end{align*}
 This implies that $(x \vartriangleright z)xz^{-1}x^{-1} \in Ker f_{\phi}$, for all $x,z \in X$. It follows that $f_{\phi}$ defines an unique group homomorphism $\hat{\phi}: As(X) \longrightarrow G$ such that $\hat{\phi}(g_{x}) = f_{\phi}(x)= \phi(x)$ for all $x \in X$. 
 
 Conversely, let $\psi: As(X) \longrightarrow G$ be a group homomorphism. Because $g_{x\vartriangleright z}g_{x}g_{y}^{-1}g_{x}^{-1}=1$, in $As(X)$, then for every $x,z \in X$,
  \[
     \psi(1) = \psi(g_{x\vartriangleright z}g_{x}g_{y}^{-1}g_{x}^{-1})=  \psi(g_{x \vartriangleright z}) \psi(g_{x})\psi(g_{z})^{-1}\psi(g_{x})^{-1}=1.
 \]
Therefore, $\psi(g_{x \vartriangleright z}) = \psi(g_{x})\psi(g_{z})\psi(g_{x})^{-1} .$ 
Consider the natural map $\eta_{X}: X \longrightarrow As(X) $, then the map $\hat{\psi}: X \longrightarrow Conj(G)$ defined by $\hat{\psi}(x)= (\psi \circ \eta)(x)= \psi(g_{x}) $ is a rack homomorphism. Indeed, $\hat{\psi}(x \vartriangleright z) = \psi(g_{x \vartriangleright z}) =\psi(g_{x})\psi(g_{z})\psi(g_{x})^{-1} = \hat{\psi}(x)\hat{\psi}(y)\hat{\psi}(x)^{-1}$.
\end{proof}
In \cite[Remark~2.14, page 8]{Ven} Vendramin gives an useful hint for the proof of the following proposition, so we omit it. 
\begin{prop}\label{asinfi}
The associated group $As(X)$ of a rack $X$ is an infinite group.
\end{prop}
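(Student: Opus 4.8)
The plan is to exhibit, for an arbitrary rack $X$, a surjective group homomorphism from $As(X)$ onto an infinite group, which forces $As(X)$ itself to be infinite. The natural target is $\z$. By the universal property (Theorem \ref{Theorem 1.2}), to produce a homomorphism $As(X) \to \z$ it suffices to produce a rack homomorphism $X \to Conj(\z)$; but since $\z$ is abelian, $Conj(\z)$ is the trivial rack on the underlying set $\z$, so a rack homomorphism $X \to Conj(\z)$ is just a constant map (any map sending everything to a fixed $n \in \z$ is a rack homomorphism into the trivial rack). That only yields the trivial homomorphism, so this route alone is too weak; I instead work directly with the presentation $As(X) = \langle g_x, x\in X \mid g_{x\vartriangleright y} = g_x g_y g_x^{-1}\rangle$ from the Remark.

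The key observation is that every defining relator $g_{x\vartriangleright y} g_x g_y^{-1} g_x^{-1}$ has total exponent sum zero in the generators $\{g_x\}$. Hence the map sending each generator $g_x \mapsto 1 \in \z$ respects all the relations and descends to a well-defined homomorphism $\varepsilon : As(X) \to \z$ with $\varepsilon(g_x) = 1$ for every $x$. Concretely one builds it as $F(X) \to \z$, $x \mapsto 1$, via the universal property of the free group, and checks it kills the normal subgroup $N$ because each generating word of $N$ maps to $1 + 1 - 1 - 1 = 0$. Since $X$ is nonempty, $\varepsilon(g_x) = 1$ generates $\z$, so $\varepsilon$ is surjective; therefore $As(X)$ surjects onto $\z$ and must be infinite. (If one wants to allow $X = \emptyset$ as a degenerate case, $As(\emptyset)$ is trivial, so one should tacitly assume $X \neq \emptyset$, which is the standing assumption for a rack carrying a representation anyway.)

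The main obstacle is essentially bookkeeping rather than conceptual: one must verify that the abelianization-type map is well-defined, i.e. that the assignment $g_x \mapsto 1$ is compatible with passing to the quotient $F(X)/N$. This is exactly the exponent-sum computation above, and it is routine. A secondary point worth a sentence is why surjectivity of $\varepsilon$ gives infiniteness of the domain: a surjective homomorphism onto an infinite group cannot have finite domain, since the image is a quotient of the domain and quotients of finite groups are finite. That completes the argument, matching the hint attributed to Vendramin (the abelianization of $As(X)$ is the free abelian group on the orbit set of $X$, which is infinite as soon as $X$ is nonempty — but for the proposition we only need one $\z$ factor).
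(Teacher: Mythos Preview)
Your argument is correct: the exponent-sum map $\varepsilon: As(X) \to \z$ sending each $g_x \mapsto 1$ is well defined (every defining relator $g_{x\vartriangleright y}g_xg_y^{-1}g_x^{-1}$ has exponent sum zero) and surjective once $X \neq \emptyset$, so $As(X)$ is infinite. The paper gives no proof of its own, only a reference to Vendramin's hint, and this is exactly the standard argument that hint points to.

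One small correction to your planning paragraph: the universal-property route you dismiss as ``too weak'' actually produces the very same map. The constant function $\phi: X \to Conj(\z)$, $x \mapsto 1$, \emph{is} a rack homomorphism (since $\z$ is abelian, $Conj(\z)$ is the trivial rack, and the condition $\phi(x\vartriangleright y) = \phi(x)\vartriangleright\phi(y) = \phi(y)$ is satisfied by any constant map); its lift $\hat\phi$ via Theorem~\ref{Theorem 1.2} then has $\hat\phi(g_x) = \phi(x) = 1$, which is precisely your $\varepsilon$. So the two approaches coincide rather than being alternatives, and neither is trivial.
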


We review another group associated to a rack, called the \textit{inner automorphism group}.

\begin{defi}
The \textbf{\textit{inner automorphism group}}, denoted by $Inn(X)$, is defined as the subgroup of $Aut(X)$, generated by the permutation functions $L_{x}$. Concisely, the group is
  \begin{align*}
     Inn(X):= \langle L_{x} \ | \ x\in X \rangle. 
  \end{align*}

\end{defi}

Note that, for a rack $X$, the map $L: X \longrightarrow Conj(Inn(X))$ defined by $L(x):= L_{x}$, is a rack homomorphism. Hence, from Theorem \ref{Theorem 1.2} there exits a group homomorphism $\hat{L}: As(X) \longrightarrow Inn(X)$ defined by $\hat{L}(g_{x}):=L_{x}$ for all $x \in X$. Moreover, this group homomorphism is surjective, then from first isomorphism theorem $As(X)/ ker \hat{L} \cong Inn(X)$.\\
There is a natural action of the inner automorphism group $Inn(X)$ of a rack $X$, over the underlying set $X$, given by the function
\begin{align*}
    \bullet : Inn(X) \times X &\longrightarrow X\\
             (L_{x}, y) &\longmapsto L_{x}\bullet y := L_{x}(y)= x\vartriangleright y,
  \end{align*}
The orbits of this action are known as \textbf{\textit{the connected components}} of $X$. A rack $X$ is said to be \textbf{\textit{connected}} (or \textit{indecomposable}) if the action of $Inn(X)$ on $X$ is transitive, that means, if it has only one orbit. Thus, given  $y \in X$ there exists $\phi \in Inn(X)$ such that $\phi(x) = y$. Since $\phi \in Inn(X)$ then it has the form $\phi = L_{x_{n}}^{\epsilon_{n}}L_{x_{n-1}}^{\epsilon_{n-1}} \cdots L_{x_{1}}^{\epsilon_{1}}$,  where $x_{i} \in X$ and $\epsilon_{i} \in \{1,-1\}$, for all $i \in \{1,\dots,n\}$. So, we have
  {\small{$$
      y = \phi (x)=L_{x_{n}}^{\epsilon_{n}}L_{x_{n-1}}^{\epsilon_{n-1}} \cdots L_{x_{1}}^{\epsilon_{1}}(x) =x_{n} \vartriangleright^{\epsilon_{n}} (x_{n-1} \vartriangleright^{\epsilon_{n-1}} ( \cdots \vartriangleright^{\epsilon_{2}} ( x_{1} \vartriangleright^{\epsilon_{1}} x) \cdots ). 
  $$}}

In other words, a rack $X$ is connected if and only if for all $x,y \in X$ there are $x_{1},\dots,x_{n} \in X$ such that  $L_{x_{n}}^{\epsilon_{n}}L_{x_{n-1}}^{\epsilon_{n-1}} \cdots L_{x_{1}}^{\epsilon_{1}}(x) =y$, where $\epsilon_{i} \in \{1,-1\}$, for all $i \in \{1,\dots,n\}$.

In the following example we illustrate the construction of the associated group and of the inner automorphism of a certain type of racks. 

\begin{eg} \label{Example 1.12}
Let $X=\{1,2,3,\dots,n\}$ and $\sigma  \in \mathbb{S}_{n}$. The set $X$ is a rack with the binary operation $ \vartriangleright: X \times X \longrightarrow X$, defined by $i \vartriangleright j = \sigma(j)$, for all $i,j \in X$. This rack is known as a \textit{type cyclic rack}. Indeed, note that $L_{i} = \sigma$, for all $i \in X$, then, the inner automorphism group $Inn(X) = \langle L_{1} \rangle \cong \z_{m}$, where $m$ is the order of $\sigma$. Note that, if $\sigma = (1 \ 2\ \cdots\ n) \in \mathbb{S}_{n}$, we have that $g_{k}=g_{1\vartriangleright (k-1)}= g_{1}g_{k-1}g_{1}^{-1}$, for all $k=2,\dots, n$. Thereby, we can conclude that  $g_{k} = g_{1}$, for all $k=2,\dots,n$. Then $As(X) \cong \z$.

\end{eg}
	 
Graña et al, in \cite{finitenveloping}, introduce another group associated to finite connected racks. They name it, the \textit{finite enveloping group}, and as the name suggests, this group is finite. We follow the construction given by them, but for the finiteness we give a slightly different proof.

\begin{Notation}
 Let $X$ be a rack, $x,y \in X$ and $k \in \n$; we write
 \begin{align*}
     x \vartriangleright^{k} y = L_{x}^{k}(y) = x \vartriangleright(x \vartriangleright( \cdots \vartriangleright(x \vartriangleright y)) \cdots ) \ , \hspace{0.5cm} \textit{x multiplying k times}.
 \end{align*}
\end{Notation}

The proof of the following result can be gotten in the same fashion as the one given in \cite[Lemma 2.1]{finitenveloping} by considering the identity $L_{x \vartriangleright y} = L_{x}L_{y}L_{x}^{-1} $, for all $x,y \in X$. So we omit it.

\begin{lemma} \label{Lemma 1.2}
Let $X$ be a rack and $n \in \n$ . Let $x_{1},\dots,x_{n}, y,z \in X$, such that 
\begin{align*}
    L^{\epsilon_{n}}_{x_{n}}L^{\epsilon_{n-1}}_{n-1} \cdots L^{\epsilon_{1}}_{x_{1}} (z) = y.  
\end{align*}
where $\epsilon_{i} = \pm 1 $, for all $i \in \{1,\dots,n\}$. Then, $L^{\epsilon_{n}}_{x_{n}} \cdots L^{\epsilon_{1}}_{x_{1}} L_{z} = L_{y} L^{\epsilon_{n}}_{x_{n}} \cdots L^{\epsilon_{1}}_{x_{1}}.$  
\end{lemma}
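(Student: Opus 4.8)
The plan is to prove the identity by induction on $n$, reducing everything to the single-letter case. The base case $n=1$ asserts that if $L_{x_1}^{\epsilon_1}(z)=y$, then $L_{x_1}^{\epsilon_1}L_z = L_y L_{x_1}^{\epsilon_1}$. To establish this, I would treat the two signs separately. If $\epsilon_1 = 1$, then $y = x_1 \vartriangleright z$ and the defining identity of a rack, rewritten in terms of left-multiplication maps as $L_{x \vartriangleright y} = L_x L_y L_x^{-1}$, gives directly $L_y = L_{x_1 \vartriangleright z} = L_{x_1} L_z L_{x_1}^{-1}$, hence $L_y L_{x_1} = L_{x_1} L_z$, which is the claim. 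If $\epsilon_1 = -1$, then $z = x_1 \vartriangleright y$, so by the same identity $L_z = L_{x_1 \vartriangleright y} = L_{x_1} L_y L_{x_1}^{-1}$, and rearranging yields $L_{x_1}^{-1} L_z = L_y L_{x_1}^{-1}$, again as desired. So the base case is just the conjugation identity $L_{x\vartriangleright y} = L_x L_y L_x^{-1}$ applied once, possibly after inverting.

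For the inductive step, suppose the result holds for $n-1$. Write $\psi := L_{x_{n-1}}^{\epsilon_{n-1}} \cdots L_{x_1}^{\epsilon_1}$ and set $w := \psi(z)$, so that $L_{x_n}^{\epsilon_n}(w) = y$ by hypothesis. By the inductive hypothesis applied to $\psi$ and the pair $z, w$, we get $\psi L_z = L_w \psi$. By the base case applied to $L_{x_n}^{\epsilon_n}$ and the pair $w, y$, we get $L_{x_n}^{\epsilon_n} L_w = L_y L_{x_n}^{\epsilon_n}$. Composing, $L_{x_n}^{\epsilon_n} \psi L_z = L_{x_n}^{\epsilon_n} L_w \psi = L_y L_{x_n}^{\epsilon_n} \psi$, which is exactly the assertion for $n$. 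Thus the induction closes immediately.

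The only genuine content is the conjugation identity $L_{x \vartriangleright y} = L_x L_y L_x^{-1}$ itself, which the excerpt already invokes (it is the statement that $L: X \to \mathrm{Conj}(\mathrm{Inn}(X))$ is a rack homomorphism, equivalently a direct transcription of the self-distributive axiom $x \vartriangleright (y \vartriangleright z) = (x \vartriangleright y) \vartriangleright (x \vartriangleright z)$). I do not anticipate a real obstacle here; the mild bookkeeping point is simply to handle the $\epsilon_i = -1$ cases correctly by inverting the identity, which is what makes the statement valid for arbitrary sign vectors rather than just for positive words — and this is precisely why the paper points to the analogous argument of \cite[Lemma 2.1]{finitenveloping} and omits the details.
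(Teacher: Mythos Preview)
Your proof is correct and is exactly the argument the paper has in mind: the paper omits the proof entirely, pointing to \cite[Lemma 2.1]{finitenveloping} and the identity $L_{x \vartriangleright y} = L_x L_y L_x^{-1}$, and your induction on $n$ with the sign-by-sign base case is precisely the standard way to unpack that hint. There is nothing to add.
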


\begin{lemma} \cite[Lemma 2.16]{finitenveloping}\label{Lemma 1.3}
Let $X$ be a rack and $n \in \n$, then we have the following relations in the associated group $As(X)$: 
\begin{itemize}
    \item [(1)] $g_{x}^{n}g_{y}= g_{x \vartriangleright^{n} y}g_{x}^{n}$, \ $\forall x,y \in X$.
    \item[(2)] $g_{x}g_{y}^{n} = g_{x \vartriangleright y}^{n}g_{x}$, \ $\forall x,y \in X$.
\end{itemize}
\end{lemma}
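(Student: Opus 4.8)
The plan is to establish both identities by induction on $n$, using only the defining relation $g_{x \vartriangleright y} = g_{x}g_{y}g_{x}^{-1}$ of $As(X)$ together with the recursion $x \vartriangleright^{n+1} y = x \vartriangleright (x \vartriangleright^{n} y)$, which is immediate from the convention $x \vartriangleright^{n} y = L_{x}^{n}(y)$.

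First, for item (1), I would rewrite the assertion in the equivalent conjugation form $g_{x \vartriangleright^{n} y} = g_{x}^{n}\, g_{y}\, g_{x}^{-n}$. The base case $n=1$ is exactly the defining relation. Assuming the formula holds for $n$, I compute
\[
g_{x \vartriangleright^{n+1} y} \;=\; g_{x \vartriangleright (x \vartriangleright^{n} y)} \;=\; g_{x}\, g_{x \vartriangleright^{n} y}\, g_{x}^{-1} \;=\; g_{x}\bigl(g_{x}^{n}\, g_{y}\, g_{x}^{-n}\bigr)g_{x}^{-1} \;=\; g_{x}^{n+1}\, g_{y}\, g_{x}^{-(n+1)},
\]
which closes the induction; multiplying on the right by $g_{x}^{n}$ returns the stated form $g_{x}^{n} g_{y} = g_{x \vartriangleright^{n} y} g_{x}^{n}$.

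Next, for item (2), I would rewrite the assertion as $g_{x \vartriangleright y}^{\,n} = g_{x}\, g_{y}^{\,n}\, g_{x}^{-1}$. Since $g_{x \vartriangleright y} = g_{x}g_{y}g_{x}^{-1}$ is a conjugate of $g_{y}$ and conjugation by a fixed group element is an automorphism, raising to the $n$-th power telescopes the inner factors $g_{x}^{-1}g_{x}$, giving $(g_{x}g_{y}g_{x}^{-1})^{n} = g_{x}g_{y}^{\,n}g_{x}^{-1}$ at once (equivalently, a one-line induction). Multiplying on the right by $g_{x}$ yields $g_{x}g_{y}^{\,n} = g_{x \vartriangleright y}^{\,n} g_{x}$.

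I do not expect a genuine obstacle here. The only points that require care are: (i) ensuring that the iterated operation unfolds from the outside, i.e. $x \vartriangleright^{n+1} y = x \vartriangleright (x \vartriangleright^{n} y)$ rather than with the extra factor applied last, which is precisely what $x \vartriangleright^{n} y = L_{x}^{n}(y)$ guarantees; and (ii) keeping track of the exponents of $g_{x}$ on the two sides of the equalities. One could alternatively phrase the whole argument as the image, under the homomorphism $\hat{L}\colon As(X) \to Inn(X)$, of the analogous permutation identities $L_{x}^{n}L_{y} = L_{x \vartriangleright^{n} y}L_{x}^{n}$ and $L_{x}L_{y}^{n} = L_{x \vartriangleright y}^{n}L_{x}$, but the direct computation in $As(X)$ above is cleaner and self-contained.
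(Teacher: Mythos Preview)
Your proof is correct: both parts follow by the straightforward induction on $n$ that you outline, using only the defining relation $g_{x\vartriangleright y}=g_xg_yg_x^{-1}$ and the fact that conjugation is a group automorphism. The paper does not supply its own proof of this lemma---it is simply quoted from \cite[Lemma 2.16]{finitenveloping}---so there is nothing to compare against; your argument is exactly the standard one.

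One minor remark: your closing aside about phrasing the argument via $\hat{L}\colon As(X)\to Inn(X)$ is not quite right as stated, since an identity in the quotient $Inn(X)$ does not lift back to $As(X)$. You correctly do not rely on this, but it would be cleaner to drop that sentence.
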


\begin{prop}\cite[Lemma 2.17]{finitenveloping}\label{proposition 1.3}
    Let $X$ be a rack and $x \in X$ such that $L_{x}$ has finite order $n$, then in the associated group $As(X)$,  we have that $g_{x}^{n} \in Z(As(X))$, where $Z(As(X))$ is the center of the group $As(X)$.
\end{prop}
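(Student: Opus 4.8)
The plan is to reduce the claim to checking that $g_{x}^{n}$ commutes with each generator of $As(X)$, and then to apply Lemma \ref{Lemma 1.3}(1). First I would recall the presentation of $As(X)$ recorded after its definition: the group is generated by $\{g_{y}\mid y\in X\}$, so an element is central as soon as it commutes with every $g_{y}$. Next I would translate the hypothesis: by the Notation, $x\vartriangleright^{n}y = L_{x}^{n}(y)$, and ``$L_{x}$ has order $n$'' means exactly $L_{x}^{n}=\mathrm{id}_{X}$, hence $x\vartriangleright^{n}y = y$ for every $y\in X$.

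Then I would invoke Lemma \ref{Lemma 1.3}(1), which asserts that in $As(X)$ we have $g_{x}^{n}g_{y} = g_{x\vartriangleright^{n}y}\,g_{x}^{n}$ for all $y\in X$. Plugging in $x\vartriangleright^{n}y = y$ gives $g_{x}^{n}g_{y} = g_{y}\,g_{x}^{n}$ for every $y\in X$. Since $g_{x}^{n}$ commutes with each generator $g_{y}$ (and therefore also with each $g_{y}^{-1}$), it commutes with every element of $As(X)$, i.e.\ $g_{x}^{n}\in Z(As(X))$.

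I do not expect a genuine obstacle here: the statement is essentially a one-line corollary of Lemma \ref{Lemma 1.3}(1), the only substantive input being the observation that a finite order $n$ for $L_{x}$ forces $x\vartriangleright^{n}y=y$. The single point deserving a moment's care is to justify that commuting with the $g_{y}$ suffices, which is immediate from the fact that $\{g_{y}\mid y\in X\}$ is a generating set of $As(X)$. If one preferred a self-contained argument avoiding Lemma \ref{Lemma 1.3}, an alternative would be to iterate the relation $g_{x}g_{y}=g_{x\vartriangleright y}g_{x}$ $n$ times to obtain $g_{x}^{n}g_{y}=g_{x\vartriangleright^{n}y}g_{x}^{n}$ by induction on $n$, and then conclude as above; but invoking Lemma \ref{Lemma 1.3}(1) directly is cleaner.
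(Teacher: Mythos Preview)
Your proposal is correct. The paper does not actually supply its own proof of this proposition: it simply cites \cite[Lemma 2.17]{finitenveloping} and moves on. Your argument via Lemma~\ref{Lemma 1.3}(1) together with the observation that $\{g_{y}\mid y\in X\}$ generates $As(X)$ is exactly the intended one-line derivation, and is how the result is obtained in the cited source as well.
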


\begin{theo}\cite[Lemma 2.18]{finitenveloping} \label{Theorem 1.4}
    Let $X$ be a finite connected rack. Then for every $x \in X$, the permutations $L_{x} \in Inn(x)$  have the same order. Furthermore, if $n$ is the order of all the permutations $L_{x}$ then, in the associated group $As(X)$, we have the relation $g_{x}^{n} = g_{y}^{n}$ for all $x,y \in X$.
\end{theo}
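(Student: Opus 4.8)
The plan is to establish the two assertions in turn, using only Lemma~\ref{Lemma 1.2}, Proposition~\ref{proposition 1.3}, Lemma~\ref{Lemma 1.3}, the presentation of $As(X)$, and the hypothesis that $X$ is connected.

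For the first claim, fix $x,y\in X$. By connectedness there are $x_{1},\dots,x_{n}\in X$ and $\epsilon_{1},\dots,\epsilon_{n}\in\{1,-1\}$ with $L^{\epsilon_{n}}_{x_{n}}L^{\epsilon_{n-1}}_{x_{n-1}}\cdots L^{\epsilon_{1}}_{x_{1}}(x)=y$. Put $\phi:=L^{\epsilon_{n}}_{x_{n}}\cdots L^{\epsilon_{1}}_{x_{1}}\in Inn(X)$ and apply Lemma~\ref{Lemma 1.2} with $z=x$: this gives $\phi L_{x}=L_{y}\phi$, that is, $L_{y}=\phi L_{x}\phi^{-1}$. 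Hence $L_{x}$ and $L_{y}$ are conjugate in $Inn(X)$, so they have the same order; denote this common order by $n$.

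For the second claim, first observe that Proposition~\ref{proposition 1.3} now applies to \emph{every} $x\in X$ with this same $n$, so $g_{x}^{n}\in Z(As(X))$ for all $x\in X$. Next I would use Lemma~\ref{Lemma 1.3}(2), namely $g_{x}g_{y}^{n}=g_{x\vartriangleright y}^{n}g_{x}$, to get
\[
g_{x\vartriangleright y}^{n}=g_{x}g_{y}^{n}g_{x}^{-1}=g_{y}^{n},
\]
the last equality because $g_{y}^{n}$ is central. Replacing $y$ by $x\vartriangleright^{-1}y$ and using $x\vartriangleright(x\vartriangleright^{-1}y)=y$ yields also $g_{x\vartriangleright^{-1}y}^{n}=g_{y}^{n}$. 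Thus $g_{z\vartriangleright^{\epsilon}y}^{n}=g_{y}^{n}$ for all $z,y\in X$ and $\epsilon\in\{1,-1\}$.

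Finally, to conclude $g_{x}^{n}=g_{y}^{n}$ for arbitrary $x,y\in X$, invoke connectedness once more: write $y=x_{n}\vartriangleright^{\epsilon_{n}}(\cdots\vartriangleright^{\epsilon_{2}}(x_{1}\vartriangleright^{\epsilon_{1}}x)\cdots)$ and apply the identity $g_{z\vartriangleright^{\epsilon}w}^{n}=g_{w}^{n}$ repeatedly, peeling off one left multiplication at a time, to obtain $g_{y}^{n}=g_{x}^{n}$. I expect the only point requiring care is the bookkeeping in the first paragraph — correctly applying Lemma~\ref{Lemma 1.2} to realize $L_{y}$ as a conjugate of $L_{x}$; everything after that is a one-line computation using centrality of $g_{x}^{n}$ followed by a straightforward induction along a connecting word.
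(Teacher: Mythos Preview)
Your argument is correct. The paper itself does not supply a proof of this theorem (it is quoted from \cite[Lemma~2.18]{finitenveloping}), but the surrounding lemmas --- Lemma~\ref{Lemma 1.2}, Lemma~\ref{Lemma 1.3}, and Proposition~\ref{proposition 1.3} --- are set up precisely so that the proof goes exactly as you wrote it: conjugacy of the $L_{x}$ via Lemma~\ref{Lemma 1.2} for the first part, and centrality of $g_{x}^{n}$ together with Lemma~\ref{Lemma 1.3}(2) and an induction along a connecting word for the second. The only cosmetic issue is that you use the symbol $n$ both for the length of the connecting word $x_{1},\dots,x_{n}$ and, a few lines later, for the common order of the $L_{x}$; renaming one of them would avoid confusion.
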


Let $X$ be a finite and connected rack, let $x_{0} \in X$ and $n$ be the order of the function $L_{x_{0}}$. Observe that, from the previous theorem and Proposition \ref{proposition 1.3}, the group $ \langle g_{x_{0}}^{n} \rangle \subset As(X)$, generated by $g_{x_{0}}^{n}$, is a normal subgroup of $As(X)$. Therefore, we can consider the quotient group  $As(X)/ \langle g_{x_{0}}^{n} \rangle$.

\begin{defi}
Let $X$ be a finite and connected rack. The quotient group $As(X)/ \langle g_{x_{0}}^{n} \rangle$, denoted by $\overline{G_{X}}$, is called the \textit{\textbf{finite enveloping group}}.
\end{defi}

With the above notation and from the definition of $As(X)$ we have a presentation for the group $\overline{G_{X}}$.
\begin{align*}
    \overline{G_{X}}= \langle g_{x} \ , \ x \in X  \ | \ g_{x\vartriangleright y} = g_{x}g_{y}g_{x}^{-1} , g_{x}^{n} =1,  \ x,y \in X \rangle. 
\end{align*}

The  following theorem can be found in \cite[Lemma 2.19]{finitenveloping}, here we give a more combinatorial proof.
\begin{theo}\label{Theorem 1.5}
    Let $X$ be a finite and connected rack, then the group $\overline{G_{X}}$ is finite.
\end{theo}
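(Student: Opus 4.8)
The plan is to realize $\overline{G_{X}}$ as a finite-by-finite group, using the surjection onto $Inn(X)$ already available in the text. First I would recall that the rack map $L\colon X\to Conj(Inn(X))$, $x\mapsto L_{x}$, yields by Theorem \ref{Theorem 1.2} the surjective homomorphism $\hat{L}\colon As(X)\to Inn(X)$ with $\hat{L}(g_{x})=L_{x}$. Since every $L_{x}$ has order $n$ by Theorem \ref{Theorem 1.4}, we get $\hat{L}(g_{x_{0}}^{n})=L_{x_{0}}^{n}=\mathrm{id}$, so $\langle g_{x_{0}}^{n}\rangle\subseteq\ker\hat{L}$ and $\hat{L}$ descends to a surjection $\bar{L}\colon\overline{G_{X}}\to Inn(X)$ with $\bar{L}(g_{x})=L_{x}$. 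As $X$ is finite, $Inn(X)\leq\mathrm{Sym}(X)$ is finite, so it suffices to control $\ker\bar{L}$.

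The crucial observation is that $\ker\bar{L}$ is central. Using the defining relations $g_{y}g_{x}g_{y}^{-1}=g_{y\vartriangleright x}$ of $\overline{G_{X}}$ together with their consequence $g_{y}^{-1}g_{x}g_{y}=g_{y\vartriangleright^{-1}x}$, an induction on word length shows that conjugation by an arbitrary element $g=g_{y_{1}}^{\epsilon_{1}}\cdots g_{y_{k}}^{\epsilon_{k}}$ sends each generator $g_{x}$ to $g_{\bar{L}(g)(x)}$. Hence if $g\in\ker\bar{L}$ then $g$ commutes with every $g_{x}$, and since the $g_{x}$ generate $\overline{G_{X}}$ we conclude $g\in Z(\overline{G_{X}})$. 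Therefore $\overline{G_{X}}/Z(\overline{G_{X}})$ is a quotient of $\overline{G_{X}}/\ker\bar{L}\cong Inn(X)$, hence finite; that is, $\overline{G_{X}}$ is center-by-finite.

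Now I would invoke Schur's theorem (a group with finite central quotient has finite commutator subgroup) to conclude that $[\overline{G_{X}},\overline{G_{X}}]$ is finite, so it only remains to check that the abelianization is finite. Abelianizing the presentation $\overline{G_{X}}=\langle g_{x}\mid g_{x\vartriangleright y}=g_{x}g_{y}g_{x}^{-1},\ g_{x}^{n}=1\rangle$ turns the conjugation relations into $\bar{g}_{x\vartriangleright y}=\bar{g}_{y}$ and $\bar{g}_{x\vartriangleright^{-1}y}=\bar{g}_{y}$; since $X$ is connected, every element of $X$ is reached from a fixed one by a composition of maps $L_{x_{i}}^{\pm1}$, so all $\bar{g}_{x}$ collapse to a single generator $t$, and the relations $g_{x}^{n}=1$ give exactly $nt=0$. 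Thus $\overline{G_{X}}^{\mathrm{ab}}\cong\z/n\z$ is finite, and from $|\overline{G_{X}}|=|[\overline{G_{X}},\overline{G_{X}}]|\cdot|\overline{G_{X}}^{\mathrm{ab}}|$ the group $\overline{G_{X}}$ is finite.

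I expect the main conceptual point to be the identification $\ker\bar{L}\subseteq Z(\overline{G_{X}})$ (everything after that is Schur's theorem plus an abelianization computation), and I note that the connectedness hypothesis is genuinely used both there, via transitivity of $Inn(X)$, and again to collapse the abelianization. An alternative, more elementary route that avoids Schur's theorem — presumably the ``more combinatorial'' proof alluded to in the excerpt — would instead establish a normal form: using $g_{x}^{-1}=g_{x}^{n-1}$ to eliminate inverses and $g_{x}g_{y}=g_{x\vartriangleright y}g_{x}$ to reorganize, one shows every element of $\overline{G_{X}}$ is represented by a word in the $g_{x}$ of length bounded in terms of $|X|$ and $n$, whence $\overline{G_{X}}$ is finite; the bookkeeping there is exactly tracking the permutation in $Inn(X)$ produced while sorting the word.
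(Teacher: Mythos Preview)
Your argument is correct and takes a genuinely different route from the paper. The paper pursues exactly the ``more combinatorial'' alternative you sketch at the end: it shows directly that every element of $\overline{G_{X}}$ can be rewritten, using $g_{x}g_{y}^{e}=g_{x\vartriangleright y}^{e}g_{x}$ repeatedly, into a word $g_{x_{i_{1}}}^{u_{1}}\cdots g_{x_{i_{m}}}^{u_{m}}$ with $m\leq |X|$, distinct subscripts, and exponents in $\{0,\dots,n-1\}$, which immediately bounds $|\overline{G_{X}}|$. Your approach instead is structural: $\ker\bar{L}$ central $\Rightarrow$ center-by-finite $\Rightarrow$ (Schur) finite commutator, then finish via the abelianization. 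This is essentially the argument of the original reference \cite{finitenveloping}, to which the paper's proof is offered as an alternative. The trade-off is that the paper's route is self-contained and yields an explicit (if crude) normal form, while yours is shorter and more conceptual but imports Schur's theorem. One small remark: the inclusion $\ker\bar{L}\subseteq Z(\overline{G_{X}})$ does not itself use connectedness; that hypothesis enters only through Theorem~\ref{Theorem 1.4} (to make $n$ and hence $\overline{G_{X}}$ well defined) and in your abelianization step.
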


\begin{proof} Let $X = \{x_{0},\dots,x_{k-1}\}$ be a connected rack. Suppose that $n \in \n$ is the order of $L_{x_{i}}$ for all $i \in \{0,\dots,k-1\} $. From Theorem \ref{Theorem 1.4}, for every $i \in \{0,\dots,k-1\}$, we have that $g_{x_{i}}^{n} \langle g_{x_{0}}^{n} \rangle =\langle g_{x_{0}}^{n} \rangle$, thus, $g_{x_{i}}^{-1}\langle g_{x_{0}}^{n} \rangle = g_{x_{i}}^{n-1}\langle g_{x_{0}}^{n} \rangle$.\\
We claim that every word $\omega \in \overline{G_{X}}$ is of the form $\omega =g_{x_{i_{1}}}^{u_{1}} g_{x_{i_{2}}}^{u_{2}}\cdots g_{x_{i_{m}}}^{u_{m}}\langle g_{x_{0}}^{n} \rangle$ where $m \leq k$, $u_{t} \in \{0,1,\dots,n-1\}$, $x_{i_{t}} \in X$ for all $t \in \{1,\dots,m\}$ and $g_{x_{i_{r}}} \neq g_{x_{i_{s}}}$, for every $r \neq s$. In fact, let $\omega \in \overline{G_{X}}$ thus, from the definition of $\overline{G_{X}}$, $\omega = g_{x_{j_{1}}}^{e_{1}}g_{x_{j_{2}}}^{e_{2}} \cdots g_{x_{j_{\hat{m}}}}^{e_{\hat{m}}} \langle g_{x_{0}}^{n} \rangle$, where $e_{t} \in \{0,1,\dots,n-1\}$. Suppose that there exists $r < s$, such that $g_{x_{j_{r}}} = g_{x_{j_{s}}}$, and for every $t \in \{r+1,\dots,s-1\}$ it satisfies that $g_{x_{j_{t}}} \neq g_{x_{j_{s}}}$. Therefore, we have that $\omega$ has the following form
\begin{center}
    $ (g_{x_{j_{1}}}^{e_{1}}\cdots g_{x_{j_{r-1}}}^{e_{r-1}}\langle g_{x_{0}}^{n} \rangle)( g_{x_{j_{r}}}^{e_{r}}g_{x_{j_{r+1}}}^{e_{r+1}} \cdots g_{x_{j_{s-1}}}^{e_{s-1}}g_{x_{j_{s}}}^{e_{s}}\langle g_{x_{0}}^{n} \rangle)(g_{x_{j_{s+1}}}^{e_{s+1}}g_{x_{j_{s+2}}}^{e_{s+2}}\cdots g_{x_{j_{\bar{m}}}}^{e_{\bar{m}}}\langle g_{x_{0}}^{n} \rangle)$.
\end{center}
First, we prove that in the associated group $As(X)$ we have
\begin{align*}
    g_{x_{j_{r}}}^{e_{r}}g_{x_{j_{r+1}}}^{e_{r+1}} \cdots g_{x_{j_{s-1}}}^{e_{s-1}}g_{x_{j_{s}}}^{s}
    &= g_{x_{j_{r}} \vartriangleright^{e_{r}} x_{j_{r+1}}}^{e_{r+1}} \cdots g_{x_{j_{r}} \vartriangleright^{e_{r}} x_{j_{s-1}}}^{e_{s-1}}g_{x_{j_{s}}}^{e_{s} +e_{r}}.
\end{align*}
We use induction on $e_{r}$. Let $e_{r}=1$ then, from Lemma \ref{Lemma 1.3} $(2)$, we have that $g_{x_{j_{r}}}g_{x_{j_{t}}}^{e_{t}} = g_{x_{j_{r}} \vartriangleright x_{j_{t}}}^{e_{t}} g_{x_{j_{r}}}$, for every $t \in \{r+1,\dots,s-1\}$. It follows that,
\begin{align*}
    g_{x_{j_{r}}}g_{x_{j_{r+1}}}^{e_{r+1}} \cdots g_{x_{j_{s-1}}}^{e_{s-1}}g_{x_{j_{s}}}^{e_{s}} 
    &= g_{x_{j_{r}} \vartriangleright x_{j_{r+1}}}^{e_{r+1}}g_{x_{j_{r}}}g_{x_{j_{r+2}}}^{e_{r+2}} \cdots g_{x_{j_{s}}}^{e_{s}}\\
    &= g_{x_{j_{r}} \vartriangleright x_{j_{r+1}}}^{e_{r+1}}g_{x_{j_{r}} \vartriangleright x_{j_{r+2}}}^{e_{r+2}}g_{x_{j_{r}}} \cdots g_{x_{j_{s}}}^{e_{s}}\\
    &\ \ \vdots\\
    &= g_{x_{j_{r}} \vartriangleright^{e_{r}} x_{j_{r+1}}}^{e_{r+1}} \cdots g_{x_{j_{r}} \vartriangleright^{e_{r}} x_{j_{s-1}}}^{e_{s-1}}g_{x_{r}}g_{x_{j_{s}}}^{e_{s} }\\
    &= g_{x_{j_{r}} \vartriangleright^{e_{r}} x_{j_{r+1}}}^{e_{r+1}} \cdots g_{x_{j_{r}} \vartriangleright^{e_{r}} x_{j_{s-1}}}^{e_{s-1}}g_{x_{j_{s}}}^{e_{s} +1}.
\end{align*}
Hence,  the result holds for $e_{r}=1$. Now, suppose that
\begin{center}
    $ g_{x_{j_{r}}}^{e_{r}-1}g_{x_{j_{r+1}}}^{e_{r+1}} \cdots g_{x_{j_{s-1}}}^{e_{s-1}}g_{x_{j_{s}}}^{s}
    = g_{x_{j_{r}} \vartriangleright^{e_{r}-1} x_{j_{r+1}}}^{e_{r+1}} \cdots g_{x_{j_{r}} \vartriangleright^{e_{r}-1} x_{j_{s-1}}}^{e_{s-1}}g_{x_{j_{s}}}^{e_{s} +e_{r}-1}$.
\end{center}
 Then, from Lemma \ref{Lemma 1.3} $(2)$, 
\begin{align*}
    g_{x_{j_{r}}}^{e_{r}}g_{x_{j_{r+1}}}^{e_{r+1}} \cdots g_{x_{j_{s-1}}}^{e_{s-1}}g_{x_{j_{s}}}^{s}
    &=  g_{x_{j_{r}}}[g_{x_{j_{r}}}^{e_{r}-1}g_{x_{j_{r+1}}}^{e_{r+1}} \cdots g_{x_{j_{s-1}}}^{e_{s-1}}g_{x_{j_{s}}}^{s}]\\    
    &=g_{x_{j_{r}}}[g_{x_{j_{r}} \vartriangleright^{e_{r}-1} x_{j_{r+1}}}^{e_{r+1}} \cdots g_{x_{j_{r}} \vartriangleright^{e_{r}-1} x_{j_{s-1}}}^{e_{s-1}}g_{x_{j_{s}}}^{e_{s} +e_{r}-1}]\\
    &=g_{x_{j_{r}} \vartriangleright^{e_{r}} x_{j_{r+1}}}^{e_{r+1}} g_{x_{j_{r}}}g_{x_{j_{r}} \vartriangleright^{e_{r}-1} x_{j_{r+2}}}^{e_{r+2}}\cdots g_{x_{j_{r}} \vartriangleright^{e_{r}-1} x_{j_{s-1}}}^{e_{s-1}}g_{x_{j_{s}}}^{e_{s} +e_{r}-1}\\
    &=g_{x_{j_{r}} \vartriangleright^{e_{r}} x_{j_{r+1}}}^{e_{r+1}}g_{x_{j_{r}} \vartriangleright^{e_{r}} x_{j_{r+2}}}^{e_{r+2}} g_{x_{j_{r}}}\cdots g_{x_{j_{r}} \vartriangleright^{e_{r}-1} x_{j_{s-1}}}^{e_{s-1}}g_{x_{j_{s}}}^{e_{s} +e_{r}-1}\\
    &\ \ \vdots \\
    &=g_{x_{j_{r}} \vartriangleright^{e_{r}} x_{j_{r+1}}}^{e_{r+1}}g_{x_{j_{r}} \vartriangleright^{e_{r}} x_{j_{r+2}}}^{e_{r+2}} \cdots g_{x_{j_{r}} \vartriangleright^{e_{r}} x_{j_{s-1}}}^{e_{s-1}}g_{x_{j_{r}}}g_{x_{j_{s}}}^{e_{s} +e_{r}-1}\\
    &=g_{x_{j_{r}} \vartriangleright^{e_{r}} x_{j_{r+1}}}^{e_{r+1}}g_{x_{j_{r}} \vartriangleright^{e_{r}} x_{j_{r+2}}}^{e_{r+2}} \cdots g_{x_{j_{r}} \vartriangleright^{e_{r}} x_{j_{s-1}}}^{e_{s-1}}g_{x_{j_{s}}}^{e_{s} +e_{r}}.
\end{align*}
And the result follows.

Now, observe that $g_{x_{j_{r}} \vartriangleright^{e_{r}} x_{j_{t}}} \neq g_{x_{j_{r}}}$, for every $t \in \{r+1,\dots,s-1\}$. Indeed, suppose that $g_{x_{j_{r}} \vartriangleright^{e_{r}} x_{j_{t}} }= g_{x_{j_{r}}}$ for some $t \in \{r+1,\dots,s-1\}$. From the definition of $As(X)$,
\begin{align*}
    g_{x_{j_{r}}} &= g_{x_{j_{r}} \vartriangleright^{e_{r}} x_{j_{t}}}\\
    &= g_{x_{j_{r}} \vartriangleright (x_{j_{r}} \vartriangleright^{e_{r}-1} x_{j_{t}})}\\
    &= g_{x_{j_{r}}}g_{x_{j_{r}} \vartriangleright^{e_{r}-1} x_{j_{t}}}g_{x_{j_{r}}}^{-1}\\
     &\ \ \vdots\\
    &= g_{x_{j_{r}}}^{e_{r}} g_{x_{j_{t}}}g_{x_{j_{r}}}^{-e_{r}}.
\end{align*}
Therefore, we have  $g_{x_{j_{t}}}= g_{x_{j_{r}}}^{e_{r}} g_{x_{j_{r}}}g_{x_{j_{r}}}^{-e_{r}} = g_{x_{j_{r}}}^{e_{r}-e_{r}+1} = g_{x_{j_{r}}}=g_{x_{j_{s}}}$, which contradicts the assumption that for every $t \in \{r+1,\dots,s-1\}$, $g_{x_{j_{t}}} \neq g_{x_{j_{s}}}$. Thus, $g_{x_{j_{r}} \vartriangleright^{e_{r}} x_{j_{t}}} \neq g_{x_{j_{r}}}$ for every $t \in \{r+1,\dots,s-1\}$.

From the above, we can write the word $\omega $ as
{\footnotesize{
\begin{align*}
    \omega &=  (g_{x_{j_{1}}}^{e_{1}}\cdots g_{x_{j_{r-1}}}^{e_{r-1}}\langle g_{x_{0}}^{n} \rangle)( g_{x_{j_{r}}}^{e_{r}} \cdots g_{x_{j_{s-1}}}^{e_{s-1}}g_{x_{j_{s}}}^{e_{s}}\langle g_{x_{0}}^{n} \rangle)(g_{x_{j_{s+1}}}^{e_{s+1}}\cdots g_{x_{j_{\bar{m}}}}^{e_{\bar{m}}}\langle g_{x_{0}}^{n} \rangle)\\
    &= (g_{x_{j_{1}}}^{e_{1}}\cdots g_{x_{j_{r-1}}}^{e_{r-1}}\langle g_{x_{0}}^{n} \rangle)(g_{x_{j_{r}} \vartriangleright^{e_{r}} x_{j_{r+1}}}^{e_{r+1}} \cdots g_{x_{j_{r}} \vartriangleright^{e_{r}} x_{j_{s-1}}}^{e_{s-1}}g_{x_{j_{s}}}^{e_{s} +e_{r}}\langle g_{x_{0}}^{n} \rangle)(g_{x_{j_{s+1}}}^{e_{s+1}}\cdots g_{x_{j_{\bar{m}}}}^{e_{\bar{m}}}\langle g_{x_{0}}^{n} \rangle)\\
    &=g_{x_{j_{1}}}^{e_{1}}\cdots g_{x_{j_{r-1}}}^{e_{r-1}}g_{x_{j_{r}} \vartriangleright^{e_{r}} x_{j_{r+1}}}^{e_{r+1}} \cdots g_{x_{j_{r}} \vartriangleright^{e_{r}} x_{j_{s-1}}}^{e_{s-1}}g_{x_{j_{s}}}^{e_{s} +e_{r}}g_{x_{j_{s+1}}}^{e_{s+1}}\cdots g_{x_{j_{\bar{m}}}}^{e_{\bar{m}}}\langle g_{x_{0}}^{n} \rangle\\
    &= g_{x_{i_{1}}}^{u_{1}} \cdots g_{x_{i_{m}}}^{u_{m}}\langle g_{x_{0}}^{n} \rangle.
\end{align*}}}

From where, $g_{x_{i_{r}}} \neq g_{x_{i_{s}}}$, for every $r \neq s$. If $m > k$. Then, we would have repeated occurrences and we can apply the same process shown above to reduce the word. Therefore, every word $\omega \in \overline{G_{X}}$ is of the form $\omega =g_{x_{i_{1}}}^{u_{1}} \cdots g_{x_{i_{m}}}^{u_{m}}\langle g_{x_{0}}^{n} \rangle$, where $m \leq k$, $u_{t} \in \{0,1,\dots,n-1\}$, $x_{i_{t}} \in X$ for all $t \in \{1,\dots,m\}$ and $g_{x_{i_{r}}} \neq g_{x_{i_{s}}}$ for every $r \neq s$. It follows that $\overline{G_{X}}$ is finite.
\end{proof}

\section{The Permutation Quandle}\label{NewSection 1.3}
Vendramin developed a GAP package \cite{vendramin2012classification}, which is able to compute the finite enveloping group of connected quandles of order less than 48. Based on this, we have identified the finite enveloping group of a family of quandles that we refer to as the \textit{permutation quandle}. Let us consider the following lemma.

\begin{lemma}\label{Lemma 1.1}
Let $G$ be a group. Consider the quandle $Conj (G)$, then every conjugacy class of $G$ is a subquandle of $Conj(G)$.
\end{lemma}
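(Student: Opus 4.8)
The plan is to verify directly that an arbitrary conjugacy class $C\subseteq G$ is closed under the quandle operation of $Conj(G)$ and under its inverse operation; this is precisely what is needed for $C$ to be a subquandle, since the quandle axioms themselves (idempotency and self\-distributivity) are then inherited automatically from $Conj(G)$, and the restriction of each left translation to $C$ is forced to be a bijection of $C$.

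First I would fix $x,y\in C$. Writing $C$ as the conjugacy class of a representative $z\in G$, we have $y=aza^{-1}$ for some $a\in G$, hence
\[
x\vartriangleright y = xyx^{-1} = (xa)\,z\,(xa)^{-1},
\]
which is again a conjugate of $z$ and therefore lies in $C$. This shows $L_{x}(C)\subseteq C$ for every $x\in C$. Next, since $x\in C\subseteq G$ is invertible in $G$, the map $L_{x}\colon Conj(G)\to Conj(G)$ has inverse $h\mapsto x^{-1}hx$, and the very same computation gives $x\vartriangleright^{-1}y = x^{-1}yx\in C$ for all $y\in C$, so $L_{x}^{-1}(C)\subseteq C$. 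Combining the two inclusions, $L_{x}$ restricts to a bijection $C\to C$ for every $x\in C$.

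Finally I would observe that $(C,\vartriangleright)$ inherits the rest of the quandle structure: $x\vartriangleright x = xxx^{-1} = x$ for $x\in C$, and the self\-distributive law holds on $C$ because it holds on all of $Conj(G)$. Hence $C$ with the restricted operation is a quandle and the inclusion $C\hookrightarrow Conj(G)$ is a quandle homomorphism, i.e. $C$ is a subquandle of $Conj(G)$. There is essentially no serious obstacle here; the only point that requires a moment's care is recalling that ``subquandle'' demands closure under $\vartriangleright^{-1}$ as well as under $\vartriangleright$ (so that the left translations restrict to \emph{bijections} of the subset), rather than merely closure under $\vartriangleright$.
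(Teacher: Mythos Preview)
Your proof is correct and follows essentially the same direct verification as the paper's: both show that for $x,y$ in a conjugacy class $C$, the element $x\vartriangleright y = xyx^{-1}$ is again a conjugate of the class representative, hence lies in $C$. Your argument is in fact slightly more thorough than the paper's, since you also check closure under $\vartriangleright^{-1}$ (needed so that each $L_x$ restricts to a bijection of $C$, particularly when $C$ is infinite), whereas the paper only verifies closure under $\vartriangleright$.
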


\begin{proof} Let $g \in G$ and let $C_{g} = \{hgh^{-1} \ | \ h \in G\}$ be the conjugacy class represented by $g$. Let $q,p \in C_{g}$, we have to prove that $q \vartriangleright p \in C_{g}$. First, suppose that $q = p$, then $q \vartriangleright p = p \vartriangleright p = ppp^{-1}=p \in C_{g}$.\\
Now, suppose that $q \neq p$. Since $q,p \in C_{g}$ then there exists $h_{1},h_{2} \in G$ such that $p= h_{1}gh^{-1}_{1} = h_{1} \vartriangleright g$ and $q = h_{2}gh^{-1}_{2} = h_{2} \vartriangleright g$. We have,
\begin{align*}
     q \vartriangleright p &= qpq^{-1}=(h_{2}gh^{-1}_{2})(h_{1}gh^{-1}_{1})(h_{2}gh^{-1}_{2})^{-1}\\
    &=(h_{2}gh^{-1}_{2}h_{1})g(h^{-1}_{1}h_{2}g^{-1}h^{-1}_{2})= (h_{2}gh^{-1}_{2}h_{1})g(h_{2}gh^{-1}_{2}h_{1})^{-1}.
\end{align*}
Therefore, $\ q \vartriangleright p \in C_{g}$. Thus, $C_{g}$ is a subquandle of $Conj(G)$.
 \end{proof}

It is well known that the set of all transpositions of the symmetric group $\mathbb{S}_{n}$, forms a conjugacy class of $\mathbb{S}_{n}$. Thus, from Lemma \ref{Lemma 1.1}, that set is a subquandle of the quandle $Conj(\mathbb{S}_{n})$, and then, the following definition makes sense.   

  \begin{defi}[The permutation quandle]\label{Definition 1.7}
Consider the conjugacy quandle $Conj(\mathbb{S}_{n})$ of the symmetric group $\mathbb{S}_{n}$ with $n \geq 3$. We define the \textit{\textbf{permutation quandle}}, denoted by $\mathbb{P}_{n}$, as the subquande of $Conj(\mathbb{S}_{n})$ of all transpositions of $\mathbb{S}_{n}$.
\end{defi}

That is, the permutation quandle $\mathbb{P}_{n}$ is the set  $\mathbb{P}_{n} := \{(i \ j) \ | \  (i \ j) \in \mathbb{S}_{n}\}$, with the operation 
\[(i \ j) \vartriangleright (k \ r) =  (i \ j)(k \ r)(i \ j)^{-1} = (i \ j)(k \ r)(i \ j),\]
where $(i \ j)$ denotes the transposition that interchanges $i$ and $j$. The name of the permutation quandle will be justified later on. For now, note that for every $(i \ j),(k \ r) \in \mathbb{P}_{n}$, we have
\begin{align*}
        (i \ j) \vartriangleright [ (i \ j) \vartriangleright (k \ r)] &= (i \ j) \vartriangleright [ (i \ j)(k \ r)(i \ j)]\\
       &= (i \ j)[(i \ j)(k \ r)(i \ j)](i \ j)\\
    &= (k \ r). 
\end{align*}
So, $\mathbb{P}_{n}$ is an \textit{involutive} quandle.

\begin{eg} \label{Example 1.8}
Consider the permutation quandle $\mathbb{P}_{3} = \{(i \ j) \ | \ (i \ j) \in \mathbb{S}_{3}\}$. That is, $\mathbb{P}_{3} = \{ (1 \ 2), (1 \ 3),(2 \ 3)\}$. Then,

\begin{center}
\begin{tabular}{c|c|c|c}
$\vartriangleright$ & \textbf{(2 \ 3)} & \textbf{(1 \ 3)}  & \textbf{(1 \ 2)}\\ \hline
$\textbf{(2 \ 3)}$ & (2 \ 3) & (1 \ 2) & (1 \ 3) \\ \hline
$\textbf{(1 \ 3)}$ & (1 \ 2) & (1 \ 3) & (2 \ 3) \\ \hline
$\textbf{(1 \ 2)}$ & (1 \ 3) & (2 \ 3) & (1 \ 2) 
\end{tabular}
\end{center}
\end{eg}

\begin{lemma}
The permutation quandle $\mathbb{P}_{n}$ is a connected quandle.
\end{lemma}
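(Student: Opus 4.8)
The plan is to show that the inner automorphism group $Inn(\mathbb{P}_n)$ acts transitively on the set of transpositions. Since connectedness means exactly that $Inn(X)$ has a single orbit, it suffices to prove that any transposition can be carried to any other by a composition of left-multiplication maps $L_{(i\ j)}$ and their inverses.

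First I would recall that in the permutation quandle the left multiplication is conjugation: $L_{(i\ j)}(\tau) = (i\ j)\,\tau\,(i\ j)$ for any transposition $\tau$. A direct computation (essentially the same one already displayed before Example~\ref{Example 1.8}) shows that conjugating a transposition $(k\ r)$ by $(i\ j)$ yields the transposition on the images $(\,(i\ j)(k)\ \ (i\ j)(r)\,)$. So the action of $Inn(\mathbb{P}_n)$ on transpositions is the one induced from the natural action of the subgroup of $\mathbb{S}_n$ generated by all transpositions — which is all of $\mathbb{S}_n$ — on unordered pairs $\{a,b\}$ with $a\neq b$. Since $\mathbb{S}_n$ acts transitively on the set of $2$-element subsets of $\{1,\dots,n\}$ (given $\{a,b\}$ and $\{c,d\}$ there is a permutation sending one to the other, and it can be written as a product of transpositions), transitivity follows.

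To make this self-contained without invoking the homomorphism $Inn(\mathbb{P}_n)\to\mathbb{S}_n$, I would instead argue concretely: given two transpositions $(a\ b)$ and $(c\ d)$, exhibit an explicit element of $Inn(\mathbb{P}_n)$ carrying the first to the second. For instance, if the pairs $\{a,b\}$ and $\{c,d\}$ are disjoint, note that $L_{(a\ c)}L_{(b\ d)}(a\ b) = (a\ c)(b\ d)(a\ b)(b\ d)(a\ c)$, and one checks this equals $(c\ d)$; if they share one element, say $b=d$, then $L_{(a\ c)}(a\ b) = (a\ c)(a\ b)(a\ c) = (c\ b) = (c\ d)$. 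Handling the two cases (disjoint supports, one common point) covers everything since the case $(a\ b)=(c\ d)$ is trivial. Each verification is a one-line computation with transpositions.

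The only mild obstacle is bookkeeping: being careful that the conjugation formula $(i\ j)(k\ r)(i\ j) = (\sigma(k)\ \sigma(r))$ with $\sigma=(i\ j)$ is applied correctly when the supports overlap, and that the chosen conjugators actually lie in $\mathbb{P}_n$ (i.e.\ are genuine transpositions, using $n\geq 3$ so that enough distinct letters are available — in the disjoint case we already have four distinct letters, and in the shared-point case we need a third letter, which exists since $n\geq 3$). No deeper idea is needed; the statement is essentially the observation that $\mathbb{S}_n$ is generated by transpositions and acts transitively on $2$-subsets.
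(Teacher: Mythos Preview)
Your proposal is correct and takes essentially the same approach as the paper: a case analysis according to whether the two transpositions are equal, share exactly one letter, or have disjoint supports, exhibiting in each case an explicit product of one or two left-multiplications carrying one to the other. Your additional high-level remark (that this is just transitivity of $\mathbb{S}_n$ on $2$-subsets) is a nice framing, but the concrete verification you give matches the paper's argument almost exactly.
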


\begin{proof} Let $(i \ j), (l \ t) \in \mathbb{P}_{n}$. Since $(i \ j) = (j \ i)$, for all $(i \ j) \in \mathbb{P}_{n}$, then without loss of generality, we have three cases
\begin{itemize}
    \item [(I)] If $(i \ j) = (l \ t)$ then $L_{(i \ j)}[(l \ t)] = (i \ j) \vartriangleright (l \ t) = (i \ j)(l \ t)(i \ j) = (i \ j)$.
    \item [(II)]Suppose that $i \neq l$ and $j = t$, then 
    $$L_{(i \ l)}[(l \ t)] = (i \ l) \vartriangleright (l \ t) =  (i \ l)(l \ t)(i \ l)= (i \ t) = (i \ j).$$ 
    \item[(III)]  If $i \neq l$ and $j \neq  t$ then 
    $$
    \begin{aligned}
     L_{(j \ t)}L_{(i \ l)}[(l \ t)] &= (j \ t) \vartriangleright [ (i \ l) \vartriangleright (l \ t)] = (j \ t) \vartriangleright[(i \ l)(l \ t)(i \ l)]\\
     &= (j \ t) \vartriangleright (i \ t) =(j \ t)(i \ t)(j \ t)= (i \ j).   
    \end{aligned}
    $$
\end{itemize}

Therefore the permutation quandle $\mathbb{P}_{n}$ is connected.
\end{proof}

The next proposition provides us a complete description of the finite enveloping group $\overline{G_{\mathbb{P}_{n}}}$ of the permutation quandle $\mathbb{P}_{n}$. Specifically, we prove that $\overline{G_{\mathbb{P}_{n}}} \cong \mathbb{S}_{n}$, which justifies the name   ``\textit{permutation quandle}''.

\begin{prop}\label{proposition 1.4}
Let $\mathbb{P}_{n}$ be the permutation quandle then its finite enveloping group $\overline{G_{\mathbb{P}_{n}}}$ is isomorphic to the symmetric group $\mathbb{S}_{n}$.
\end{prop}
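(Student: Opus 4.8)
The plan is to show that the defining presentation of $\overline{G_{\mathbb{P}_{n}}}$ is a form of the classical presentation of $\mathbb{S}_{n}$ on all transpositions, and to pin this down by exhibiting mutually inverse group homomorphisms between the two groups. As a first step I would determine the integer governing the construction of the finite enveloping group: for $n\ge 3$, conjugation by a transposition $(i\ j)$ is an involution of the set of all transpositions which is not the identity (for instance it sends $(i\ k)$ to $(j\ k)$), so every $L_{(i\ j)}\in Inn(\mathbb{P}_{n})$ has order $2$. By Theorem~\ref{Theorem 1.4} and Proposition~\ref{proposition 1.3} this means $\overline{G_{\mathbb{P}_{n}}}=As(\mathbb{P}_{n})/\langle g_{x_{0}}^{2}\rangle$ and it has the presentation
\[
\overline{G_{\mathbb{P}_{n}}}=\bigl\langle\, g_{(i\ j)},\ (i\ j)\in\mathbb{P}_{n}\ \big|\ g_{(i\ j)}^{2}=1,\ \ g_{(i\ j)\vartriangleright(k\ r)}=g_{(i\ j)}g_{(k\ r)}g_{(i\ j)}^{-1}\,\bigr\rangle .
\]
Unwinding the conjugation relations: when $(i\ j)$ and $(k\ r)$ are disjoint, $(i\ j)\vartriangleright(k\ r)=(k\ r)$, so $g_{(i\ j)}$ and $g_{(k\ r)}$ commute; when they share exactly one index, $(i\ j)\vartriangleright(j\ k)=(i\ k)$, which (using $g_{(i\ j)}^{-1}=g_{(i\ j)}$) reads $g_{(i\ k)}=g_{(i\ j)}g_{(j\ k)}g_{(i\ j)}$.

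Next I would construct a surjection $\overline{\iota}\colon\overline{G_{\mathbb{P}_{n}}}\twoheadrightarrow\mathbb{S}_{n}$. The inclusion $\mathbb{P}_{n}\hookrightarrow Conj(\mathbb{S}_{n})$ is a rack homomorphism, so by the universal property (Theorem~\ref{Theorem 1.2}) it lifts to a group homomorphism $\hat{\iota}\colon As(\mathbb{P}_{n})\to\mathbb{S}_{n}$ with $\hat{\iota}(g_{(i\ j)})=(i\ j)$. This map is onto because transpositions generate $\mathbb{S}_{n}$, and since $(i\ j)^{2}=1$ in $\mathbb{S}_{n}$ the normal subgroup $\langle g_{x_{0}}^{2}\rangle$ lies in $\ker\hat{\iota}$; hence $\hat{\iota}$ descends to an epimorphism $\overline{\iota}\colon\overline{G_{\mathbb{P}_{n}}}\twoheadrightarrow\mathbb{S}_{n}$ with $\overline{\iota}(g_{(i\ j)})=(i\ j)$.

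The crux is the reverse direction. Using the Coxeter presentation $\mathbb{S}_{n}=\langle s_{1},\dots,s_{n-1}\mid s_{i}^{2}=1,\ (s_{i}s_{i+1})^{3}=1,\ (s_{i}s_{j})^{2}=1 \text{ for } |i-j|\ge 2\rangle$, I would define $\theta(s_{i}):=g_{(i\ i+1)}$ and check the relations on the images. The relation $g_{(i\ i+1)}^{2}=1$ is a defining relation of $\overline{G_{\mathbb{P}_{n}}}$; the relation $(s_{i}s_{j})^{2}=1$ for $|i-j|\ge 2$ holds because the transpositions $(i\ i+1)$ and $(j\ j+1)$ are disjoint, so their generators commute; and the braid relation $(s_{i}s_{i+1})^{3}=1$, equivalently $g_{(i\ i+1)}g_{(i+1\ i+2)}g_{(i\ i+1)}=g_{(i+1\ i+2)}g_{(i\ i+1)}g_{(i+1\ i+2)}$, holds because both sides equal $g_{(i\ i+2)}$: the left side by the conjugation relation applied to $(i\ i+1)\vartriangleright(i+1\ i+2)=(i\ i+2)$, the right side by $(i+1\ i+2)\vartriangleright(i\ i+1)=(i\ i+2)$, in both cases using $g^{-1}=g$. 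Hence $\theta\colon\mathbb{S}_{n}\to\overline{G_{\mathbb{P}_{n}}}$ is a well-defined homomorphism. This relation-checking is the only non-routine part of the argument, and it comes down to two short conjugation computations.

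Finally I would assemble the pieces. The homomorphism $\theta$ is surjective, since iterating $g_{(i\ k)}=g_{(i\ j)}g_{(j\ k)}g_{(i\ j)}$ expresses every generator $g_{(i\ j)}$ as a word in the adjacent generators $g_{(k\ k+1)}$. The composite $\overline{\iota}\circ\theta\colon\mathbb{S}_{n}\to\mathbb{S}_{n}$ sends $s_{i}\mapsto g_{(i\ i+1)}\mapsto(i\ i+1)=s_{i}$, so it is the identity; hence $\theta$ is also injective. A bijective homomorphism is an isomorphism, so $\theta$ is an isomorphism (with inverse $\overline{\iota}$), which gives $\overline{G_{\mathbb{P}_{n}}}\cong\mathbb{S}_{n}$. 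Equivalently, since $\overline{G_{\mathbb{P}_{n}}}$ is finite by Theorem~\ref{Theorem 1.5}, surjectivity of $\theta$ forces $|\overline{G_{\mathbb{P}_{n}}}|\le n!$ while surjectivity of $\overline{\iota}$ forces $|\overline{G_{\mathbb{P}_{n}}}|\ge n!$, so both maps are bijections. As a remark, the whole argument could be bypassed by citing that the displayed presentation of $\overline{G_{\mathbb{P}_{n}}}$ is a classical presentation of $\mathbb{S}_{n}$.
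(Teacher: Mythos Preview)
Your proof is correct and follows essentially the same route as the paper: both construct the surjection $\overline{G_{\mathbb{P}_{n}}}\twoheadrightarrow\mathbb{S}_{n}$ from the inclusion $\mathbb{P}_{n}\hookrightarrow Conj(\mathbb{S}_{n})$ via the universal property, then build the reverse map by verifying the Coxeter relations for the adjacent generators $g_{(i\ i+1)}$ in $\overline{G_{\mathbb{P}_{n}}}$ and conclude by a cardinality comparison. Your use of $\overline{\iota}\circ\theta=\mathrm{id}$ to get injectivity of $\theta$ is a slightly cleaner finish than the paper's, which goes directly to the order inequalities, but the substance is identical.
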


\begin{proof} Consider the function $\psi: \mathbb{P}_{n} \longrightarrow Conj(\mathbb{S}_{n})$, defined by $\psi[(i \ j)]:= (i \ j)$ for all $(i \ j) \in \mathbb{P}_{n}$. Since $\mathbb{P}_{n}$ is a subquandle of $Conj(\mathbb{S}_{n})$, then for every $(i \ j), (k \ r) \in \mathbb{P}_{n} $ we have that $(i \ j) \vartriangleright (k \ r) \in \mathbb{P}_{n} $. That is, there exists $(l \ t) \in \mathbb{P}_{n} \subset \mathbb{S}_{n}$, such that $(i \ j) \vartriangleright (k \ r) = (l \ t)$, then
\begin{align*}
    \psi[(i \ j) \vartriangleright (k \ r)] &= \psi[(l \ t)] = (l \ t) =(i \ j) \vartriangleright (k \ r)
    = \psi[(i \ j)] \vartriangleright \psi[(k \ r)].
\end{align*}
It implies that $\psi$ is a quandle homomorphism. Now, from Theorem \ref{Theorem 1.2} there exists a group homomorphism $\hat{\psi}: As(\mathbb{P}_{n}) \longrightarrow \mathbb{S}_{n}$ such that $\hat{\psi}(g_{(i  j)})= \psi[(i \ j)]$. Since the permutation quandle is involutive and connected, then $L_{(i \ j)}^{2} =id$, for all $(i \ j) \in \mathbb{P}_{n}$. Which implies that $g_{(ij)}^{2}=g_{(k r)}^{2}$, for all $(i \ j), (k \ r) \in \mathbb{P}_{n}$. Observe that,
\begin{center}
    $\hat{\psi}(g_{(1 2)}^{2}) = \hat{\psi}^{2}(g_{(1 2)}) = (1 \ 2)^{2} = 1_{\mathbb{S}_{n}}$.
\end{center}
 Therefore, $\langle g_{(12)}^{2} \rangle \subset ker(\hat{\psi})$. Thus, $\hat{\psi}$ induces a group homomorphism $\bar{\psi}: \overline{G_{\mathbb{P}_{n}}} \longrightarrow \mathbb{S}_{n}$ such that $\bar{\psi}(g_{(ij)}\langle g_{(12)}^{2} \rangle) = \hat{\psi}(g_{(ij)}) = \psi[(i \ j)] = (i \ j)$. Let us see that $\bar{\psi}$ is bijective. In fact, let $\sigma \in \mathbb{S}_{n}$, since the set of all transpositions is a generating set of $\mathbb{S}_{n}$ then there exist $(i_{1} \ j_{1}),\dots, (i_{k} \ j_{k}) \in \mathbb{S}_{n}$ such that $\sigma = (i_{1} \ j_{1})(i_{2} \ j_{2}) \cdots (i_{k} \ j_{k})$. Therefore,
\begin{align*}
    \sigma &= (i_{1} \ j_{1})(i_{2} \ j_{2}) \cdots (i_{k} \ j_{k})\\
    &= \bar{\psi}(g_{(i_{1}j_{1})}\langle g_{(12)}^{2} \rangle) \bar{\psi}(g_{(i_{2}j_{2})}\langle g_{(12)}^{2} \rangle)\cdots \bar{\psi}(g_{(i_{k}j_{k})}\langle g_{(12)}^{2} \rangle) \\
    &= \bar{\psi}(g_{(i_{1}j_{1})}g_{(i_{2}j_{2})} \cdots g_{(i_{k}j_{k})}\langle g_{(12)}^{2} \rangle).
\end{align*}
Hence, $\bar{\psi}$ is surjerctive.\\
Now, we use the presentation of the symmetric group $\mathbb{S}_{n} = \langle \sigma_{i},\dots, \sigma_{n-1} \ | \ \sigma^{2}_{i} = 1, (\sigma_{i} \sigma_{i+1})^{3} = 1,(\sigma_{i} \sigma_{j})^{2} = 1 , |j-1|>1 \ \rangle $, where $\sigma_{i} = (i \ \ i+1)$ for all $i \in \{1,\dots ,n-1\}$ to prove the injectivity. \\
Let $(k \ r) \in \mathbb{P}_{n}$, since $(k \ r ) = (r \ k)$, without loss of generality, we suppose that $k <r$. Therefore, $r-k >0$. If $r-k = 1$, then $r = k+1$ and we have that $(k \ r) = (k \ \ k+1)$. If $r-k > 1$, note that,
\begin{align*}
    (k \ r) &= (k \ \ k+1)(k+1 \ \ r)(k \ \ k+1)\\ 
    &= (k \ \ k+1) \vartriangleright (k+1 \ \ r),\\
    \\
    (k+1 \ \ r) &= (k+1 \ \ k+2)(k+2 \ \ r)(k+1 \ \ k+2)\\
    &= (k+1 \ \ k+2) \vartriangleright (k+2 \ \ r),\\
    & \ \ \vdots\\
   (r-2 \ \ r) &= (r-2 \ \ r-1)(r-1 \ \ r)(r-2 \ \ r-1)\\
    &= (r-2 \ \ r-1) \vartriangleright (r-1 \ \ r).
\end{align*}
Therefore, 
$$
    (k \ r) = ( k \ \ k+1) \vartriangleright [(k+1 \ \ k+2) \vartriangleright [ \cdots \vartriangleright [(r-2 \ \ r-1) \vartriangleright (r-1 \ \ r)] \cdots].
$$
From where, it follows that
\begin{align*}
    g_{(k \ r)} &= g_{( k \  k+1) \vartriangleright [(k+1 \  k+2) \vartriangleright [ \cdots \vartriangleright [(r-2 \  r-1) \vartriangleright (r-1 \  r)] \cdots]}\\
    &= g_{( k \  k+1)}g_{(k+1 \  k+2) \vartriangleright [ \cdots \vartriangleright [(r-2 \ r-1) \vartriangleright (r-1 \  r)] \cdots]}g_{( k \ k+1)}^{-1}\\
    &=g_{( k \  k+1)}g_{( k+1 \  k+2)}g_{(k+2 \  k+3) \vartriangleright [ \cdots \vartriangleright [(r-2 \ r-1) \vartriangleright (r-1 \  r)] \cdots]}g_{( k+1 \ k+2)}^{-1}g_{( k \ k+1)}^{-1}\\
    &\ \ \vdots\\
    &= g_{( k \  k+1)}g_{( k+1 \  k+2)} \cdots g_{(r-2 \ r-1)}g_{(r-1 \  r)}g_{(r-2 \ r-1)}^{-1} \cdots g_{( k+1 \ k+2)}^{-1}g_{( k \ k+1)}^{-1}.
\end{align*}
Hence, the set of elements $\{g_{(i \  i+1)} \in As(\mathbb{P}_{n}) \ | \ i = 1,\dots,n-1\}$ is a generating set of $As(\mathbb{P}_{n})$ and thus, the set $\{g_{(i \  i+1)}\langle g_{(12)}^{2} \rangle \in \overline{G_{\mathbb{P}_{n}}} \ | \ i = 1,\dots,n-1\}$ is a generating set of $\overline{G_{\mathbb{P}_{n}}}$.
Now, note that for every $i=1,\dots,n-1$, we have that $(i \  i+1)\vartriangleright (i+1 \ i+2) = (i \  i+1)(i+1 \ i+2)(i \  i+1) = (i \ i+2)$ and $(i+1 \ i+2)\vartriangleright (i \  i+1) = (i+1 \ i+2)(i \  i+1)(i+1 \ i+2) = (i \ i+2)$. Therefore,
\begin{align*}
    (g_{(i \ i+1)}g_{(i+1 \ i+2)})^{3} &= g_{(i \  i+1)}g_{(i+1 \ i+2)}g_{(i \  i+1)}g_{(i+1 \ i+2)}g_{(i \  i+1)}g_{(i+1 \ i+2)}\\
    &= g_{(i \  i+1)\vartriangleright (i+1 \ i+2)}g_{(i+1 \ i+2)\vartriangleright (i \  i+1)}\\
    &= g_{(i \ i+2)}g_{(i \ i+2)} = g_{(i \ i+2)}^{2}.
\end{align*}
It follows that $(g_{(i \ i+1)}g_{(i+1 \ i+2)}\langle g_{(12)}^{2} \rangle)^{3}=  g_{(i \ i+2)}^{2} \langle g_{(12)}^{2} \rangle = \langle g_{(12)}^{2} \rangle = 1_{\overline{G_{\mathbb{P}_{n}}}}$. \\
Let $i,j \in \{1,\dots,n-1\}$ such that $|j-i|>1$. We want to prove that $(g_{(i \ i+1)}g_{(j \ j+1)}\langle g_{(12)}^{2} \rangle)^{2} =1_{\overline{G_{\mathbb{P}_{n}}}} $. Since $|j-i|>1$ we have two cases:
\begin{itemize}
    \item If $j-i>1$, then $j>1+i$ then 
    $$(i \ \ i+1) \vartriangleright (j \ \ j+1) =(i \ \ i+1)(j \ \ j+1)(i \ \ i+1)= (j \ \ j+1),$$
    and therefore
    \begin{align*}
        (g_{(i \ i+1)}g_{(j \ j+1)}\langle g_{(12)}^{2} \rangle)^{2} &= (g_{(i \ i+1)}g_{(j \ j+1)}\langle g_{(12)}^{2} \rangle)(g_{(i \ i+1)}g_{(j \ j+1)}\langle g_{(12)}^{2} \rangle)\\
        &=(g_{(i \ i+1)}g_{(j \ j+1)}g_{(i \ i+1)})g_{(j \ j+1)}\langle g_{(12)}^{2} \rangle\\
        &= g_{(i \ i+1) \vartriangleright (j \ j+1)}g_{(j \ j+1)}\langle g_{(12)}^{2} \rangle\\
        &=g_{(j \ j+1)}g_{(j \ j+1)}\langle g_{(12)}^{2} \rangle\\
        &= \langle g_{(12)}^{2} \rangle = 1_{\overline{G_{\mathbb{P}_{n}}}}.
 \end{align*}
 \item If $j-i<-1$, then $j<i-1$ then 
 $$(i \ \ i+1) \vartriangleright (j \ \ j+1) =(i \ \ i+1)(j \ \ j+1)(i \ \ i+1)= (j \ \ j+1),$$ 
 and therefore
 \begin{align*}
        (g_{(i \ i+1)}g_{(j \ j+1)}\langle g_{(12)}^{2} \rangle)^{2} 
        &= \langle g_{(12)}^{2} \rangle = 1_{\overline{G_{\mathbb{P}_{n}}}}.
 \end{align*}
\end{itemize}
Then, the group $\overline{G_{\mathbb{P}_{n}}}$ satisfies all the relations of the presentation of $\mathbb{S}_{n}$. Thus, there exists a group epimorphism $\phi: \mathbb{S}_{n} \longrightarrow \overline{G_{\mathbb{P}_{n}}}$, which implies that $|\overline{G_{\mathbb{P}_{n}}}| \leq n!$. Besides, we have that the homomorphism $\bar{\psi}: \overline{G_{\mathbb{P}_{n}}} \longrightarrow \mathbb{S}_{n}$ is surjective, then $|\mathbb{S}_{n}| \leq |\overline{G_{\mathbb{P}_{n}}}|$. Because of the fact that $|\overline{G_{\mathbb{P}_{n}}}| \leq n!$, we have that $|\overline{G_{\mathbb{P}_{n}}}| = n!$. Thus, $\bar{\psi}$ must be bijective, i.e, the map
\begin{align*}
    \bar{\psi}: \overline{G_{\mathbb{P}_{n}}} &\longrightarrow \mathbb{S}_{n}\\
    g_{(i \ j)}\langle g_{(12)}^{2} &\rangle \longmapsto (i \ j),
\end{align*}

 is a group isomorphism.
\end{proof}

\section{Finitely stable racks}\label{Section 1.3}

Elhamdadi and Moutuou in \cite{Elhamdadi} define a new type of racks, called finitely stable, in an attempt to capture the notion of identity and center in the category of racks and quandles. In this section we give a short review about this type of racks.

\begin{Notation}
If ${u_{1}, u_{2},\dots, u_{n}}$ are elements in a rack $X$, then for an element $x \in X$ we write
	\begin{center}
	    $(u_{i})_{i=1} ^n \vartriangleright x = u_{n} \vartriangleright (\cdots (u_{3} \vartriangleright (u_{2} \vartriangleright (u_{1} \vartriangleright x)))\cdots)  $.\\
	 \end{center}
\end{Notation}

\begin{defi}
A \textit{stabilizer} $u$ in a rack $X$ is an element such that
\begin{center}
$u \vartriangleright x = x$  for all $x \in X$.
\end{center}

\end{defi}

Note that, a stabilizer $u$ in a rack $X$ satisfies that $L_{u}(x)=x$ for all $x \in X$, then $L_{u}$ is the identity function in the group $Inn(X)$.

\begin{eg}
Let G be a group and $u$ a stabilizer in $Conj(G)$, we have $g = u \vartriangleright g =ugu^{-1}.$ Hence, $ug=gu$, for all $g \in G$. Thus $u$ belongs to the center $Z(G)$ of G . Then, the stabilizers of $Conj(G)$ are the elements of $Z(G)$.
\end{eg}

The previous example shows us that the definition of stabilizer allows us to capture the notion of the center of a rack. Elhamdadi and Moutuou in \cite{Elhamdadi} take the property that a rack has a stabilizer and they weaken it by the following definition:

\begin{defi}
Let $X$ be a rack 
	    \begin{itemize}
	        \item [1.] A \textbf{stabilizing family of order $n$} for $X$ is a finite subset $\{u_{1}, \dots , u_{n} \}$ of $X$ such that,  $(u_{i})_{i=1} ^n \vartriangleright x = x, \   \forall x \in X.$ In other words, $$L_{u_{n}}L_{u_{n-1}}\cdots L_{u_{1}} = id.$$
	       \item[2.] The \textbf{$n$-center} X, denoted by $S^{n}(X)$, is the collection of all stabilizing families of order $n$ for $X$.
	      \item[3.] The collection  $S(X):= \underset{n \in \mathbb{N}}{\cup} S^{n}(X)$, of all stabilizing families for, $X$ is called the \textbf{center of $X$}.
        \item[4.]  A rack $X$ is said to be \textbf{\textit{finitely stable}} if $S(X) \neq \emptyset$. Further, if $S^{n}(X) \neq \emptyset$ for some $n \in \mathbb{N}$, then the rack is said to be \textbf{\textit{n- stable}}.
     	    \end{itemize}

\end{defi}

  Let $X$ be a finite rack of order $n$ and let $x \in X$, since $L_{x} \in Sym(X)$ then $L_{x}^{n!}= id$. Hence, the family $\{x\}_{i=1}^{n!}$  is a stabilizing family for $X$. So, $S(X) \neq \emptyset$. Thus, we have the proof of the following lemma.
  
  \begin{lemma} 
    
     Every finite rack is finitely stable. 
	  \end{lemma}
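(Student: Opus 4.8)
The plan is to show directly that a finite rack $X$ of order $n$ admits a stabilizing family, hence $S(X)\neq\emptyset$, which is exactly the definition of being finitely stable. The key observation is that for any $x\in X$ the left multiplication $L_x$ is a bijection of the finite set $X$, so $L_x$ is an element of the finite symmetric group $\operatorname{Sym}(X)$. Every element of $\operatorname{Sym}(X)$ has finite order dividing $n!$, so $L_x^{n!}=\mathrm{id}$.

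First I would fix an arbitrary element $x\in X$ and set $u_i = x$ for all $i\in\{1,\dots,n!\}$. Then the composition $L_{u_{n!}}L_{u_{n!-1}}\cdots L_{u_1}$ equals $L_x^{n!}$, which is the identity function on $X$ by the previous paragraph. By the definition of a stabilizing family of order $n!$, the finite subset (really, the family) $\{x\}_{i=1}^{n!}$ is therefore a stabilizing family for $X$; equivalently $(u_i)_{i=1}^{n!}\vartriangleright y = y$ for all $y\in X$. Hence $S^{n!}(X)\neq\emptyset$, and so $S(X)=\bigcup_{m\in\mathbb N}S^m(X)\supseteq S^{n!}(X)\neq\emptyset$, which means $X$ is finitely stable (in fact $n!$-stable).

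There is no real obstacle here; the only point to be slightly careful about is the reading of ``finite subset'' in the definition of a stabilizing family, since $\{x\}_{i=1}^{n!}$ is a repeated list of a single element rather than an $n!$-element subset. But the operative condition is purely the equality of permutations $L_{u_{n!}}\cdots L_{u_1}=\mathrm{id}$, which holds, so the argument goes through as written in the paragraph preceding the lemma. Thus the proof is essentially the one-line remark already given, and I would simply record it as the proof of the lemma.
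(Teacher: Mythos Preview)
Your proof is correct and is essentially identical to the paper's own argument: pick any $x\in X$, use that $L_x\in\operatorname{Sym}(X)$ forces $L_x^{n!}=\mathrm{id}$, and conclude that $\{x\}_{i=1}^{n!}$ is a stabilizing family. Your remark about ``finite subset'' versus ``family'' is a fair terminological caveat, but as you note the operative condition is the identity $L_{u_{n!}}\cdots L_{u_1}=\mathrm{id}$, and the paper uses the constant family in exactly the same way.
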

	   	 
The next theorem characterizes the stabilizing families of the conjugacy quandle $Conj(G)$ of a group $G$. The proof can be found in \cite{Elhamdadi}. 

\begin{theo}\label{Theo 1.4}
Let G be a group, then $\{u_{i}\}_{i=1} ^{n} \in S^{n}(Conj(G))$  if and only if $u_{n}u_{n-1}\cdots u_{1} \in Z(G)$
\end{theo}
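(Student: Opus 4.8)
The plan is to reduce everything to the observation that, in $Conj(G)$, the left‑multiplication map $L_u$ is nothing but conjugation by $u$. Recall that $Conj(G)$ has underlying set $G$ and operation $u\vartriangleright x = uxu^{-1}$, so $L_u(x)=uxu^{-1}$ for all $x\in G$. The condition $\{u_i\}_{i=1}^n\in S^n(Conj(G))$ unwinds, by definition of a stabilizing family, to $L_{u_n}L_{u_{n-1}}\cdots L_{u_1}=\mathrm{id}$, and the whole statement will follow once we identify this composite explicitly.

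First I would prove, by induction on $n$, the identity
\[
 \bigl(L_{u_n}\circ L_{u_{n-1}}\circ\cdots\circ L_{u_1}\bigr)(x) \;=\; (u_nu_{n-1}\cdots u_1)\,x\,(u_nu_{n-1}\cdots u_1)^{-1}
\]
for all $x\in G$. The base case $n=1$ is immediate, and the inductive step is the one-line computation $L_{u_{n}}\bigl((u_{n-1}\cdots u_1)x(u_{n-1}\cdots u_1)^{-1}\bigr)=(u_n\cdots u_1)x(u_n\cdots u_1)^{-1}$. Writing $w:=u_nu_{n-1}\cdots u_1$, this says the composite map is conjugation by $w$; note the order matches the convention fixed in the Notation, where $(u_i)_{i=1}^n\vartriangleright x$ applies $u_1$ innermost.

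Then I would conclude: $\{u_i\}_{i=1}^n\in S^n(Conj(G))$ iff the composite is the identity map, iff $wxw^{-1}=x$ for every $x\in G$, iff $w$ commutes with every element of $G$, iff $w=u_nu_{n-1}\cdots u_1\in Z(G)$. Each equivalence is definitional once the boxed identity is in hand.

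There is essentially no hard step here; the only thing to be careful about is bookkeeping of the order of composition versus the order of the product (to land on $u_nu_{n-1}\cdots u_1$ rather than the reversed word), which the induction handles automatically. If one prefers, the whole argument can be phrased without induction by invoking that $u\mapsto L_u$ followed by the canonical map to $\mathrm{Inn}(Conj(G))$ is induced by the conjugation homomorphism $G\to\mathrm{Inn}(G)$ whose kernel is $Z(G)$, but the direct inductive computation is the most self-contained route given only the results already established in the excerpt.
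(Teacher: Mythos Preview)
Your argument is correct: composing the conjugation maps $L_{u_i}$ in $Conj(G)$ yields conjugation by the product $u_n u_{n-1}\cdots u_1$, and this is the identity precisely when that product is central. The induction is clean and the ordering bookkeeping matches the paper's convention that $u_1$ acts innermost.

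As for comparison, the paper does not supply its own proof of this theorem; it simply cites the original source \cite{Elhamdadi} (``The proof can be found in \cite{Elhamdadi}''). Your write-up is in fact the standard argument one would give, and it is self-contained relative to the definitions already recalled in the section. Your closing remark about phrasing the result via the conjugation homomorphism $G\to \mathrm{Inn}(G)$ with kernel $Z(G)$ is a nice alternative viewpoint, but the direct inductive computation you chose is the most transparent route and requires nothing beyond the definition of $Conj(G)$ and of a stabilizing family.
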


\section{Representation of racks}

Let us begin this section with a short review of a rack representation, see \cite{Elhamdadi} for more details.

\begin{defi}
    A \textbf{\textit{representation}} of a rack $X$ is a vector space $V$ equipped with a map $\rho : X \longrightarrow Conj(GL(V))$, $
		  x\longmapsto \rho_{x},$ which is a rack homomorphism, i.e., $\rho_{x \vartriangleright y} = \rho_{x}\rho_{y}\rho_{x}^{-1}$ for all $x,y \in X$.
\end{defi}
It is not hard to prove that if $\rho : X \longrightarrow Conj(GL(V))$ is a rack representation, then $\bullet: X\times V \rightarrow V$, where $x\bullet v=\rho_{x}(v)$ defines a rack action in the sense of \cite[Definition 5.1]{Elhamdadi}.
\begin{eg}
    Let $G$ be a group, then every representation of $G$ defines a representation of the quandle $Conj(G)$. Indeed, let $\rho: G \longrightarrow GL(V)$ be  a group representation, that means, $\rho$ is a group homomorphism. Let $g,h \in G$, note that,   
        $\rho_{g \vartriangleright h} = \rho_{ghg^{-1}} = \rho_{g}\rho_{h}\rho_{g}^{-1}.$  
    Therefore, the map $\rho: Conj(G) \longrightarrow Conj (GL(V))$ is also a rack homomorphism.
\end{eg}

As in representations of groups, we can define the regular representation of a rack $X$.
\begin{lemma}
    Let $X$ be a finite rack and $\mathbb{C}X$ the complex vector space, seen as the set of formal sums
	    \begin{center}
	        $\mathbb{C}X=\{ f= \underset{x \in X}{\sum} f_{x} x \ | \ f_{x} \in \mathbb{C}\ \ and\  \ f_{x} = 0 \ \text{except for finitely many x's}\}.$
	    \end{center}
 Then, the map $\lambda : X \longrightarrow Conj(GL(\mathbb{C}X))$, defined by
\begin{align*}
    \lambda_{t} (f) = \lambda_{t} (\underset{x \in X}{\sum} f_{x} x)  := \underset{x \in X}{\sum} f_{x}(t \vartriangleright x) =  \underset{u \in X}{\sum} f_{L^{-1}_{t}} u= f \circ L_{t}^{-1},
\end{align*}
is a representation of $X$.
\end{lemma} \label{lemma 2.2}
\begin{proof} Let $t \in X$ and $f \in \c X$, note that $\lambda_{t}^{-1}(f)=f(L_{t})$, indeed, we have $\lambda_{t}(\lambda_{t}^{-1}(f))=\lambda_{t}(f(L_{t}))= f(L_{t}L_{t}^{-1})=f$ and $\lambda_{t}^{-1}(\lambda_{t}(f))=\lambda_{t}(f(L_{t}^{-1}))= f(L_{t}^{-1}L_{t})=f$. Now, let $x \in X$, then  for every $z \in X$ we have,
\begin{align*}
    \lambda_{t \vartriangleright x}(f)(z) &= f(L_{t \vartriangleright x}^{-1}(z))=f((L_{t}L_{x}L_{t}^{-1})^{-1}(z))\\
    &=(fL_{t}L_{x}^{-1})(L_{t}^{-1}(z))=\lambda_{t}((fL_{t})(L_{x}^{-1}(z)))=\lambda_{t}\lambda_{x}(f(L_{t}(z)))\\
    &=\lambda_{t}\lambda_{x}\lambda_{t}^{-1}(f)(z).
\end{align*}
Therefore, $\lambda_{t\vartriangleright x} =\lambda_{t}\lambda_{x}\lambda_{t}^{-1} $.
\end{proof}

\begin{defi}
 Let $X$ be a rack, the representation described in Lemma \ref{lemma 2.2} is called the \textit{\textbf{regular representation}} of $X$. 
\end{defi}

\begin{eg}\label{example 2.5}

    Consider the Takasaki quandle $Q= (\mathbb{Z}_{3}, \vartriangleright)$, which is the additive group $\mathbb{Z}_{3}$ with the operation $x \vartriangleright y = 2x -y$. The table of this quandle is:

\begin{center}
\begin{tabular}{c|c|c|c}
$\vartriangleright$ & \textbf{0} & \textbf{1}  & \textbf{2}\\ \hline
$\textbf{0}$ & 0 & 2 & 1 \\ \hline
$\textbf{1}$ & 2 & 1 & 0 \\ \hline
$\textbf{2}$ & 1 & 0 & 2 
\end{tabular}
\end{center}
\end{eg}
The set of functions $\{\delta_{0},\delta_{1},\delta_{2}\}$ is a basis for the quandle ring $\c Q$. The regular representation $\lambda: Q \longrightarrow Conj(GL(\c Q))$ is defined by the following equalities,
\begin{align*}
    \lambda_{0}(\delta_{0}(x))= \delta_{0}(L_{0}^{-1}(x))=\delta_{0}(L_{0}(x)) = \delta_{0}(x),\\
    \lambda_{0}(\delta_{1}(x))= \delta_{1}(L_{0}^{-1}(x))=\delta_{1}(L_{0}(x)) = \delta_{2}(x),\\
    \lambda_{0}(\delta_{2}(x))= \delta_{2}(L_{0}^{-1}(x))=\delta_{2}(L_{0}(x)) = \delta_{1}(x),\\
    \\
    \lambda_{1}(\delta_{0}(x))= \delta_{0}(L_{1}^{-1}(x))=\delta_{0}(L_{1}(x)) = \delta_{2}(x),\\
    \lambda_{1}(\delta_{1}(x))= \delta_{1}(L_{1}^{-1}(x))=\delta_{1}(L_{1}(x)) = \delta_{1}(x),\\
    \lambda_{1}(\delta_{2}(x))= \delta_{2}(L_{1}^{-1}(x))=\delta_{2}(L_{1}(x)) = \delta_{0}(x),\\
    \\
    \lambda_{2}(\delta_{0}(x))= \delta_{0}(L_{2}^{-1}(x))=\delta_{0}(L_{2}(x)) = \delta_{1}(x),\\
    \lambda_{2}(\delta_{1}(x))= \delta_{1}(L_{2}^{-1}(x))=\delta_{1}(L_{2}(x)) = \delta_{0}(x),\\
    \lambda_{2}(\delta_{2}(x))= \delta_{2}(L_{
    2}^{-1}(x))=\delta_{2}(L_{2}(x)) = \delta_{2}(x).
\end{align*}
Therefore, with the basis $\{\delta_{0},\delta_{1},\delta_{2}\}$, we can describe the regular representation $\lambda: Q \longrightarrow Conj(GL(3,\c))$  in matrix form as follows:
\begin{align*}
       \lambda_{0}=  \left[
             \begin{matrix}
              1 & 0 & 0 \\
              0 & 0 & 1\\
              0 & 1 & 0
             \end{matrix}
                   \right], \ 
        \lambda_{1}=  \left[
             \begin{matrix}
              0 & 0 & 1 \\
              0 & 1 & 0\\
              1 & 0 & 0
             \end{matrix}
                   \right],
        \lambda_{2}=  \left[
             \begin{matrix}
              0 & 1 & 0 \\
              1 & 0 & 0\\
              0 & 0 & 1
             \end{matrix}
                   \right].
\end{align*}

\begin{defi}
    Let $(\rho, V)$ and $(\phi,W)$ be two representations of a rack X. A linear map $T: V \longrightarrow  W$ is called $X$- \textbf{\textit{linear}} if for all $x \in X$ the following diagram commutes

\centerline{\xymatrix{ V \ar[r]^{\rho_{x}} \ar[d]_T & V \ar[d]^{T} \\ W \ar[r]_{\phi_{x}} & W, }}

that means, $\phi_{x} T = T \rho_{x}$, for all $x \in X$.
The representations $\phi$ and $\rho$ are said to be \textbf{\textit{ equivalent}} if $T$ is an isomorphism.
\end{defi}

As in the case of groups, the previous definition defines an equivalence relation of the set of rack representations.  We use the notation $\phi \sim \rho$ for two equivalent representations.

\begin{defi}
 Let $\rho: X \longrightarrow  Conj(GL(V))$ be a representation of a rack $X$ and $W \subset V$ a subspace of $V$, such that $\rho_{x}(W) \subset W$ for all $x \in X$, then $W$ is called a \textbf{\textit{ subrepresentation}}. A representation $V$ of $X$ is said to be \textbf{\textit{ irreducible}} if the only subrepresentations are $W=\{0\}$ and $W=V$.
\end{defi}
 With the last definition we have completed our review about rack representation theory. Let us illustrate some definitions with more examples.

\begin{eg} \label{Example 1.11}
The permutation quandle $\mathbb{P}_{3}$, see Example \ref{Example 1.8}, is isomorphic to the Takasaki quandle $\mathbb{Z}_{3}$, where the isomorphism is the function $\phi: \z_{3} \longrightarrow \mathbb{P}_{3}$ defined by $\phi(0):= (2 \ 3)$, $\phi(1):= (1 \ 3)$ and $\phi(2):= (1 \ 2).$ Thus, the regular representation of this quandle is $\lambda: \mathbb{P}_{3} \longrightarrow Conj(GL(3,\c))$  defined by
\begin{align*}
       \lambda_{(2 \ 3)}=  \left[
             \begin{matrix}
              1 & 0 & 0 \\
              0 & 0 & 1\\
              0 & 1 & 0
             \end{matrix}
                   \right], \ 
        \lambda_{(1 \ 3)}=  \left[
             \begin{matrix}
              0 & 0 & 1 \\
              0 & 1 & 0\\
              1 & 0 & 0
             \end{matrix}
                   \right],
        \lambda_{(1 \ 2)}=  \left[
             \begin{matrix}
              0 & 1 & 0 \\
              1 & 0 & 0\\
              0 & 0 & 1
             \end{matrix}
                   \right].
\end{align*}

\end{eg}

\begin{eg}\label{Example 2.7}
  Let us consider $X=\{1,2,3\}$ with the operation $i \vartriangleright j = \sigma(j)$ for all $i,j \in X$, where $\sigma = (1 \ 2\ 3) \in \mathbb{S}_{3}$. $(X , \vartriangleright)$ is a rack (see Example \ref{Example 1.12})  with the table  
\begin{center}
\begin{tabular}{c|c|c|c}
$\vartriangleright$ & \textbf{1} & \textbf{2}  & \textbf{3}\\ \hline
$\textbf{1}$ & 2 & 3 & 1 \\ \hline
$\textbf{2}$ & 2 & 3 & 1 \\ \hline
$\textbf{3}$  & 2 & 3 & 1 
\end{tabular}
\end{center}
\end{eg}

Note that $L_{i}^{3} =id$ for all $i \in X$, then $L_{i}^{2} = L_{i}^{-1}$ for all $i \in X$. The regular representation $\lambda: X \longrightarrow Conj(GL(\c X))$ is defined by
\begin{align*}
    \lambda_{1}(\delta_{1}(x))= \delta_{1}(L_{1}^{-1}(x))=\delta_{1}(L_{1}^{2}(x)) = \delta_{2}(x),\\
    \lambda_{1}(\delta_{2}(x))= \delta_{2}(L_{1}^{-1}(x))=\delta_{2}(L_{1}^{2}(x)) = \delta_{3}(x),\\
    \lambda_{1}(\delta_{3}(x))= \delta_{3}(L_{1}^{-1}(x))=\delta_{3}(L_{1}^{2}(x)) = \delta_{1}(x).
\end{align*}

Since $L_{1}=L_{2}=L_{3}$ then $\lambda_{1} = \lambda_{2} = \lambda_{3}$. Then we have that

\begin{align*}
       \lambda_{1} = \lambda_{2}=\lambda_{3}=  \left[
             \begin{matrix}
              0 & 0 & 1 \\
              1 & 0 & 0\\
              0 & 1 & 0
             \end{matrix}
                   \right].
\end{align*}

Let $\varphi: G\rightarrow GL(V)$ be a group representation. Since, 
\[
\varphi(xyx^{-1})=\varphi(x)\varphi(y)\varphi(x)^{-1},
\]
for every $x,y\in G$, we have the following  equality $\varphi(x\triangleright y)=\varphi(x)\triangleright\varphi(y)$. Thereby $\varphi \in Hom_{rack}(Conj(G)),Conj(GL(V))$ and then,
\[
Hom_{gr}(G,GL(V)) \subset Hom_{rack}(Conj(G), Conj(GL(V))).
\]
The other contention is not always true. For example, for  every $k \not\in \{0,1\}$, the map $\varphi_k: Conj(G)\rightarrow Conj(\mathbb{C}^{*})$, given by $\varphi(g)=k$, for every $g\in G$, is a rack homomorphism but not a group homomorphism. Note that the rack representations $\varphi_k$ are irreducibles, because they are one dimensional and $\varphi_k \sim \varphi_r$ if and only if $k=r$. Thus, we have that the number of irreducible representations (up to equivalence) for the finite rack $Conj(G)$ is infinite, which doesn't happen in group theory, where the number of irreducible representations (up to equivalence) of finite groups is finite.  

\begin{theo}\label{bijtheo} 
With the above notation, 
    \[
    Hom_{gr}(As(X),GL(V)) \cong Hom_{rack}(X, Conj(GL(V))).
    \]
    Such isomorphism induces a biunivocal correspondence between the equivalence classes of representations. 
\end{theo}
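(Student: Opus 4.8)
The plan is to obtain the first isomorphism as a direct instance of the universal property of $As(X)$ (Theorem~\ref{Theorem 1.2}) with $G=GL(V)$, and then to verify that the resulting bijection of $Hom$-sets is compatible with intertwiners, so that it descends to a bijection of equivalence classes.

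First I would specialize Theorem~\ref{Theorem 1.2} to $G=GL(V)$. It assigns to each rack homomorphism $\rho\colon X\to Conj(GL(V))$ the group homomorphism $\hat\rho\colon As(X)\to GL(V)$ with $\hat\rho(g_x)=\rho(x)$ for all $x\in X$, and to each group homomorphism $\psi\colon As(X)\to GL(V)$ the rack homomorphism $\hat\psi\colon X\to Conj(GL(V))$ with $\hat\psi(x)=\psi(g_x)$. I would then check that these assignments are mutually inverse: $\widehat{\hat\rho}(x)=\hat\rho(g_x)=\rho(x)$ for every $x$, and $\widehat{\hat\psi}(g_x)=\hat\psi(x)=\psi(g_x)$ for every $x$, so $\widehat{\hat\psi}=\psi$ since the $g_x$ generate $As(X)$ (equivalently, by the uniqueness clause of Theorem~\ref{Theorem 1.2}). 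This gives the bijection $\rho\mapsto\hat\rho$ between $Hom_{rack}(X,Conj(GL(V)))$ and $Hom_{gr}(As(X),GL(V))$.

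Next, for the statement on equivalence classes, the key step is the following claim: given rack representations $(V,\rho)$, $(V',\rho')$ and a linear map $T\colon V\to V'$, one has $T\rho_x=\rho'_xT$ for all $x\in X$ if and only if $T\hat\rho(g)=\widehat{\rho'}(g)T$ for all $g\in As(X)$. The ``if'' direction is immediate by taking $g=g_x$. For the ``only if'' direction I would observe that $H:=\{g\in As(X)\mid T\hat\rho(g)=\widehat{\rho'}(g)T\}$ is a subgroup of $As(X)$: it is closed under products because $\hat\rho$ and $\widehat{\rho'}$ are homomorphisms, and closed under inverses because $\hat\rho(g)$ and $\widehat{\rho'}(g)$ are invertible. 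Since $H$ contains every generator $g_x$, we get $H=As(X)$. Hence $T$ is an $X$-linear isomorphism exactly when it is an $As(X)$-equivariant isomorphism, so $(V,\rho)\sim(V',\rho')$ as rack representations iff $(V,\hat\rho)\sim(V',\widehat{\rho'})$ as representations of $As(X)$.

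Finally I would assemble the pieces: the rule $[(V,\rho)]\mapsto[(V,\hat\rho)]$ is well defined and injective by the claim, and it is surjective because any group representation $(V,\psi)$ of $As(X)$ equals $(V,\widehat{\hat\psi})$, with $\hat\psi$ a rack representation of $X$. I do not anticipate a real obstacle; the only place that needs a little care is the ``only if'' part of the claim, namely propagating the intertwining relation from the generating set $\{g_x\}$ to all of $As(X)$ (including the passage to inverses), which is the routine subgroup argument sketched above. Everything else is an unwinding of Theorem~\ref{Theorem 1.2} together with the definitions of equivalence of rack and group representations.
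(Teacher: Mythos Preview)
Your proof is correct and follows the same overall approach as the paper: both invoke the universal property (Theorem~\ref{Theorem 1.2}) for the bijection of $Hom$-sets, and both then argue that equivalence is preserved. You are more explicit than the paper on the second part---the paper simply asserts that the compatibility with equivalence ``comes from a direct calculation,'' whereas you spell out the subgroup argument showing an intertwiner on generators extends to all of $As(X)$.

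One difference worth noting: the paper's proof goes on to show that the correspondence also preserves irreducibility (a reducible rack representation yields a reducible $As(X)$-representation and conversely). This is not part of the theorem as stated, but it is used immediately afterward in the corollary about trivial racks, so if you intend your proof to fully replace the paper's you may want to add that short extra paragraph; it follows by the same generator-to-group argument you already gave for intertwiners, applied to invariant subspaces.
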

\begin{proof}
    Theorem \ref{Theorem 1.2} guarantees the existence of an isomorphism $\Psi$ between $Hom_{gr}(As(X),GL(V))$ and $ Hom_{rack}(X, Conj(GL(V)))$. The proof that $\Psi$ induces a biunivocal correspondence between the equivalence classes of representations comes from a direct calculation. Now, let $\varphi: X \rightarrow Conj(GL(V))$ be a reducible representation, then there exists a non trivial $X$- invariant subspace $W$ of $V$. Thus, $\phi: X \rightarrow Conj(GL(W))$, defined by $\phi(x)=\varphi(x)$, for every $x\in X$,  is a sub-representation of $\varphi$. From Theorem \ref{Theorem 1.2}, we can define $\widehat{\phi}: As(X)\rightarrow GL(W)$, which is a non trivial sub-representation of $\widehat{\varphi}$, therefore, the induced group representation $\widehat{\varphi}$ is reducible. The reciprocal is also true, this is because any reducible group representation $\psi: As(X)\rightarrow GL(V) $ induces a reducible non-trivial rack representation of $X$.    
\end{proof}
Thus, from the previous theorem, if for a group $G$,  $As(Conj(G))=G$, then 
\[
Hom_{gr}(G,GL(V)) = Hom_{rack}(Conj(G), Conj(GL(V))).
\]
For finite groups the equality $As(Conj(G))=G$ is not true, this is because $As(Conj(G))$ is always infinite, see Proposition \ref{asinfi}.  

Besides, note that, if a rack $X$ is the trivial rack, then the associate group $As(X)$ is abelian. Thus, for every abelian group, $As(Conj(G))$ is an abelian group. Moreover, if $G$ is an abelian group of order $m$, then $As(Conj(G))\cong \mathbb{Z}^{m}$. 

The proof of the following corollary is a direct application of Theorem \ref{bijtheo}, so we omit it.
\begin{cor}
    The irreducible representations of trivial racks are one dimensional. 
\end{cor}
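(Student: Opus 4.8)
The plan is to reduce the statement to the classical fact that every (finite--dimensional complex) irreducible representation of an abelian group is one dimensional, transporting this along the correspondence of Theorem \ref{bijtheo}. First I would record the structural observation already made in the text just before the corollary: when $X$ is the trivial rack the relation $g_{x\vartriangleright y}=g_{x}g_{y}g_{x}^{-1}$ defining $As(X)$ becomes $g_{y}=g_{x}g_{y}g_{x}^{-1}$, i.e. $g_{x}g_{y}=g_{y}g_{x}$ for all $x,y\in X$; hence $As(X)$ is abelian (in fact the free abelian group on $X$). This is the only place where triviality of the rack is used.

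Next, let $(V,\varphi)$ be an irreducible representation of $X$. By Theorem \ref{bijtheo}, $\varphi$ corresponds under the isomorphism $\Psi$ to the group representation $\widehat{\varphi}\colon As(X)\longrightarrow GL(V)$ determined by $\widehat{\varphi}(g_{x})=\varphi(x)$, acting on the \emph{same} vector space $V$; and since $\Psi$ was shown in the proof of Theorem \ref{bijtheo} to send reducible representations to reducible representations and conversely, $\widehat{\varphi}$ is an irreducible representation of $As(X)$. Now, because $As(X)$ is abelian, each operator $\widehat{\varphi}(g)$ commutes with the whole image of $\widehat{\varphi}$, so it is an $As(X)$-module endomorphism of the irreducible module $V$; by Schur's lemma it is a scalar. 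Consequently every linear subspace of $V$ is $As(X)$-invariant, and irreducibility of $\widehat{\varphi}$ forces $\dim V=1$. As $\varphi$ acts on the same space $V$, we conclude $\dim V=1$, which is exactly the claim.

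The argument is essentially immediate; the only point deserving care — and the step I would flag as the potential obstacle, minor though it is — is checking that the correspondence of Theorem \ref{bijtheo} is dimension--preserving. This is clear since $\Psi$ does not alter the underlying vector space but merely reinterprets the linear maps $\varphi(x)$ as the values $\widehat{\varphi}(g_{x})$ of a group homomorphism on the generators $g_{x}$. One should also state explicitly the standing hypothesis under which Schur's lemma is being applied (finite--dimensional complex representations), matching the convention used throughout this section.
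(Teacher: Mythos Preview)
Your proposal is correct and follows exactly the approach the paper intends: the paper omits the proof entirely, stating only that it is ``a direct application of Theorem~\ref{bijtheo}'', having just observed that $As(X)$ is abelian for trivial $X$. You have simply written out that direct application in full, including the Schur's-lemma step and the observation that the correspondence preserves the underlying vector space; your explicit flag about needing the finite-dimensional complex hypothesis for Schur's lemma is appropriate, since $As(X)$ is infinite and the paper does not state this convention precisely.
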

Another important result is the following.
\begin{prop}
Any rack $X$ has infinite many equivalence classes of irreducible representations.
\end{prop}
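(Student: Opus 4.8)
The plan is to exhibit, for every rack $X$, an explicit infinite family of pairwise inequivalent one--dimensional representations, in the same spirit as the maps $\varphi_{k}$ discussed above for $Conj(G)$. For each scalar $k \in \mathbb{C}^{*}$ define $\rho^{(k)} : X \longrightarrow Conj(GL(1,\mathbb{C})) = Conj(\mathbb{C}^{*})$ by $\rho^{(k)}_{x} := k$ for all $x \in X$. The first step is to check that each $\rho^{(k)}$ is a rack homomorphism: since $\mathbb{C}^{*}$ is abelian, conjugation in $Conj(\mathbb{C}^{*})$ is trivial, so
\[
\rho^{(k)}_{x}\,\rho^{(k)}_{y}\,(\rho^{(k)}_{x})^{-1} = k\cdot k\cdot k^{-1} = k = \rho^{(k)}_{x \vartriangleright y}
\]
for all $x,y \in X$; hence $\rho^{(k)}$ is a representation of $X$ on $V = \mathbb{C}$.

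The second step is to observe that each $\rho^{(k)}$ is irreducible, which is immediate because $\dim_{\mathbb{C}} V = 1$ forces the only subrepresentations to be $\{0\}$ and $V$. The third step is to show that $\rho^{(k)} \sim \rho^{(r)}$ implies $k = r$: an $X$--linear isomorphism $T : \mathbb{C} \longrightarrow \mathbb{C}$ is multiplication by some $t \in \mathbb{C}^{*}$, and the intertwining condition $\rho^{(r)}_{x}\,T = T\,\rho^{(k)}_{x}$ reads $rt = tk$, so cancelling $t$ gives $r = k$. Therefore the assignment $k \mapsto [\rho^{(k)}]$ is injective from $\mathbb{C}^{*}$ into the set of equivalence classes of irreducible representations of $X$, and since $\mathbb{C}^{*}$ is infinite the conclusion follows.

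There is essentially no obstacle here: the whole argument rests on the fact that the target $Conj(\mathbb{C}^{*})$ of a one--dimensional representation is a trivial quandle, so any constant map into it is automatically a rack homomorphism. The only points deserving an explicit line are the verification of the homomorphism property (which uses commutativity of $\mathbb{C}^{*}$) and the computation that distinct scalars yield non--equivalent representations; both are routine. One could equivalently phrase the family as pullbacks along $\eta_{X} : X \to Conj(As(X))$ of the one--dimensional representations of $As(X)$, but the direct description above is the shortest.
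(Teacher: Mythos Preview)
Your proof is correct and follows exactly the same approach as the paper: both construct the constant one--dimensional representations $\varphi_k(x)=k$ for $k\in\mathbb{C}^{*}$, note they are irreducible, and observe that distinct scalars give inequivalent representations. You have simply written out in full the verifications that the paper leaves implicit.
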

\begin{proof}
    The proof comes from the fact that if $k\in \mathbb{C}^{*}$, then $\varphi_k: X\rightarrow Conj(\mathbb{C}^{*})$, where $\varphi_k(x)=k$, for all $x\in X$, is an irreducible representation. Moreover, if $k\neq r$, then $\varphi_{k}$ is not equivalent to $\varphi_r$.
\end{proof}

\section{Strong representations}
Elhamdadi and Moutuou in \cite[Definition 9.1] {Elhamdadi} introduce the concept of \textit{strong representations} and they show some general properties of them. In this section, we take that definition and we present new and interesting results about this type of rack representations. 
\begin{defi}
 A representation $\rho: X \longrightarrow GL(V)$ of a rack $X$ is said to be \textbf{\textit{strong}} if every stabilizing family $\{x_{1},\dots,x_{n}\} \subset X$,  satisfies that,
 \[\rho_{x_{n}}\rho_{x_{n-1}}\cdots \rho_{x_{1}} = id. \] In other words, if $L_{x_{n}}L_{x_{n-1}}\cdots L_{x_{1}}=id$ implies that $\rho_{x_{n}}\rho_{x_{n-1}}\cdots \rho_{x_{1}} = id $.
\end{defi}

\begin{prop}\label{Proposition 2.1}
Let $X$ be a rack. The regular representation $\lambda: X \longrightarrow Conj(GL(\c X))$ is strong.
\end{prop}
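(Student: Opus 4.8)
The plan is to show directly that the regular representation $\lambda$ sends stabilizing families to the identity operator on $\c X$. Suppose $\{x_1,\dots,x_n\}\subset X$ is a stabilizing family, so that by definition $L_{x_n}L_{x_{n-1}}\cdots L_{x_1}=\mathrm{id}$ as a permutation of $X$. The key observation is that the formula defining $\lambda$ in Lemma \ref{lemma 2.2} is essentially precomposition with $L_t^{-1}$: for $f\in\c X$ we have $\lambda_t(f)=f\circ L_t^{-1}$. Because precomposition reverses the order of composition, iterating this should give
\[
\lambda_{x_n}\lambda_{x_{n-1}}\cdots\lambda_{x_1}(f)=f\circ\bigl(L_{x_1}^{-1}L_{x_2}^{-1}\cdots L_{x_n}^{-1}\bigr)=f\circ\bigl(L_{x_n}L_{x_{n-1}}\cdots L_{x_1}\bigr)^{-1}.
\]
First I would verify this identity by a short induction on $n$, using only the definition $\lambda_t(f)=f\circ L_t^{-1}$ and associativity of composition of functions; the base case $n=1$ is the defining formula itself.

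Once that identity is in hand, the conclusion is immediate: since $L_{x_n}L_{x_{n-1}}\cdots L_{x_1}=\mathrm{id}$, its inverse is also $\mathrm{id}$, and hence $\lambda_{x_n}\cdots\lambda_{x_1}(f)=f\circ\mathrm{id}=f$ for every $f\in\c X$. Therefore $\lambda_{x_n}\lambda_{x_{n-1}}\cdots\lambda_{x_1}=\mathrm{id}$ on $\c X$, which is exactly the condition for $\lambda$ to be strong. Since the stabilizing family was arbitrary, $\lambda$ is a strong representation.

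There is essentially no serious obstacle here; the only point requiring a little care is getting the order of composition right, since $\lambda$ acts by precomposition and thus anti-homomorphically at the level of the underlying permutations, while it is genuinely a rack homomorphism into $Conj(GL(\c X))$ (already established in Lemma \ref{lemma 2.2}). I would state the induction cleanly so that the reversal $L_{x_1}^{-1}\cdots L_{x_n}^{-1}=(L_{x_n}\cdots L_{x_1})^{-1}$ is visibly correct, and then invoke the hypothesis that the family is stabilizing. If one prefers to avoid the functional-composition bookkeeping, an equivalent route is to evaluate on basis elements $u\in X$: $\lambda_{x_n}\cdots\lambda_{x_1}(u)=\bigl(L_{x_n}\cdots L_{x_1}\bigr)(u)=u$, using that $\lambda_t$ permutes the basis $X$ exactly via $L_t$, and this computation is perhaps the most transparent way to present it.
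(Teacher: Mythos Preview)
Your proposal is correct and follows essentially the same argument as the paper: both compute $\lambda_{x_n}\cdots\lambda_{x_1}(f)=f\circ(L_{x_n}\cdots L_{x_1})^{-1}$ by iterating the defining formula $\lambda_t(f)=f\circ L_t^{-1}$, and then invoke the stabilizing hypothesis $L_{x_n}\cdots L_{x_1}=\mathrm{id}$. Your additional remark about checking the identity on basis elements is a nice alternative presentation but not a different method.
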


\begin{proof}
    Let $\{u_{1},\dots,u_{n}\}$ be a stabilizing family of the rack $X$ and let $f \in \c X$. Note that
\begin{align*}
       (\rho_{u_{n}}\cdots \rho_{u_{2}}\rho_{u_{1}})(f) &= (\rho_{u_{n}}\cdots \rho_{u_{2}})(\rho_{u_{1}}(f))=(\rho_{u_{n}}\cdots \rho_{u_{2}})(fL_{u_{1}}^{-1})\\
    &=(\rho_{u_{n}}\cdots \rho_{u_{3}})[\rho_{u_{2}}(fL_{u_{1}}^{-1}))]=(\rho_{u_{n}}\cdots \rho_{u_{3}})(fL_{u_{1}}^{-1}L_{u_{2}}^{-1})\\
    &= fL_{u_{1}}^{-1}L_{u_{2}}^{-1} \cdots L_{u_{n}}^{-1}= f(L_{u_{n}} \cdots L_{u_{2}}L_{u_{1}})^{-1}.
\end{align*}
Since $\{u_{1},\dots,u_{n}\}$ is a stabilizing family, then $L_{u_{n}} \cdots L_{u_{1}} = id$, therefore      $(\rho_{u_{n}}\cdots \rho_{u_{2}}\rho_{u_{1}})(f) = f(L_{u_{n}} \cdots L_{u_{2}}L_{u_{1}})^{-1}=f$.
\end{proof}

We have found a connection between strong representations and representations of finite groups. To show this, let us introduce the following notation.

\begin{Notation}
    From now on, for a finite connected rack $X$, we write $n$ for the order of all permutations $L_{x}$ and $x_{0}$ for a fixed element of $X$.
\end{Notation}

In the next theorem, we prove that every irreducible strong representation of a finite connected rack $X$ induces an irreducible representation of the finite enveloping group $\overline{G_{X}}$. 

\begin{theo} \label{Theorem 2.4}
    Let $X$ be a finite-connected rack and let $(V,\rho)$ be a strong representation of $X$. Then $\rho$ induces a  representation $\bar{\rho}: \overline{G_{X}} \longrightarrow GL(V)$ of the finite enveloping group $\overline{G_{X}}$ such that $\bar{\rho}_{g_{x} \langle g_{x_{0}}^{n}\rangle} := \rho_{x}$ for all $x \in X$. Furthermore, if $\rho$ is an irreducible rack representation, then $\bar{\rho}$ is an irreducible group representation. 
\end{theo}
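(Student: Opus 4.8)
The plan is to obtain $\bar{\rho}$ in two moves: first lift $\rho$ to a representation of the associated group $As(X)$ via the universal property, and then verify that this lift annihilates $g_{x_{0}}^{n}$, so that it descends to the quotient $\overline{G_{X}} = As(X)/\langle g_{x_{0}}^{n}\rangle$.

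First I would invoke Theorem \ref{Theorem 1.2}, applied to the rack homomorphism $\rho: X \longrightarrow Conj(GL(V))$: it produces a group homomorphism $\hat{\rho}: As(X) \longrightarrow GL(V)$ with $\hat{\rho}(g_{x}) = \rho_{x}$ for all $x \in X$. The key step is then to show that $g_{x_{0}}^{n} \in \ker \hat{\rho}$, equivalently that $\rho_{x_{0}}^{n} = id$, and this is precisely where strongness of $\rho$ is used. Since $X$ is finite and connected, Theorem \ref{Theorem 1.4} gives that $L_{x_{0}}$ has order $n$, so $L_{x_{0}} L_{x_{0}} \cdots L_{x_{0}} = L_{x_{0}}^{n} = id$; that is, the constant family $\{x_{0}\}_{i=1}^{n}$ is a stabilizing family of order $n$. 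Because $\rho$ is strong, this forces $\rho_{x_{0}}^{n} = \rho_{x_{0}} \rho_{x_{0}} \cdots \rho_{x_{0}} = id$, i.e. $\hat{\rho}(g_{x_{0}}^{n}) = id$. By Proposition \ref{proposition 1.3}, $g_{x_{0}}^{n} \in Z(As(X))$, so $\langle g_{x_{0}}^{n}\rangle$ is a normal subgroup contained in $\ker \hat{\rho}$; hence $\hat{\rho}$ factors through $\overline{G_{X}}$, yielding a well-defined group homomorphism $\bar{\rho}: \overline{G_{X}} \longrightarrow GL(V)$ with $\bar{\rho}(g_{x}\langle g_{x_{0}}^{n}\rangle) = \hat{\rho}(g_{x}) = \rho_{x}$. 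This is the asserted representation.

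For the second claim, assume $\rho$ is an irreducible rack representation and let $W \subseteq V$ be a $\bar{\rho}$-invariant subspace. Since the cosets $g_{x}\langle g_{x_{0}}^{n}\rangle$, $x \in X$, generate $\overline{G_{X}}$, applying invariance to these generators gives $\rho_{x}(W) = \bar{\rho}_{g_{x}\langle g_{x_{0}}^{n}\rangle}(W) \subseteq W$ for every $x \in X$; in other words $W$ is a subrepresentation of $\rho$ in the rack sense. Irreducibility of $\rho$ then forces $W = \{0\}$ or $W = V$, so $\bar{\rho}$ is irreducible as a group representation.

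I do not expect a serious obstacle: once Theorem \ref{Theorem 1.2} is available, the argument is essentially formal. The one step that genuinely uses the hypotheses is recognizing $\{x_{0}\}_{i=1}^{n}$ as a stabilizing family of order $n$ — which needs connectedness together with Theorem \ref{Theorem 1.4} to guarantee that $L_{x_{0}}$ has order exactly $n$ — after which strongness does the work of killing $g_{x_{0}}^{n}$. It is also worth noting that the construction is independent of the choice of $x_{0}$, since $g_{x}^{n} = g_{y}^{n}$ in $As(X)$ for all $x,y \in X$ by Theorem \ref{Theorem 1.4}, so the subgroup $\langle g_{x_{0}}^{n}\rangle$ does not depend on $x_{0}$.
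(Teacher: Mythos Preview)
Your proof is correct and follows essentially the same approach as the paper: lift $\rho$ to $As(X)$ via the universal property (Theorem~\ref{Theorem 1.2}), use strongness applied to the stabilizing family $\{x_{0}\}_{i=1}^{n}$ to obtain $\rho_{x_{0}}^{n}=id$ and hence $\langle g_{x_{0}}^{n}\rangle\subset\ker\hat{\rho}$, then pass to the quotient, with the irreducibility step handled by restricting an invariant subspace to the generators $g_{x}\langle g_{x_{0}}^{n}\rangle$. Your extra remarks (invoking Proposition~\ref{proposition 1.3} for normality and noting independence from the choice of $x_{0}$) are fine additions but not needed beyond what the paper does.
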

\begin{proof} Since $\rho: X \longrightarrow Conj(GL(V))$ is a rack homomorphism then, from Theorem \ref{Theorem 1.2}, $\rho$ induces a  group homomorphism $\hat{\rho}: As(X) \longrightarrow GL(V)$ such that $\hat{\rho}_{g_{x}} = \rho_{x}$ for all $x \in X$. Now,  fix $x_{0} \in X$, note that $\hat{\rho}_{g_{x_{0}}^{n}} = \hat{\rho}^{n}_{g_{x_{0}}} = \rho^{n}_{x_{0}}$. Since $n$ is the order of all permutations $L_{x}$, then $L^{n}_{x_{0}}=id.$ Due to the fact that the representation $\rho$ is strong, we have $\hat{\rho}_{x_{0}^{n}}= \rho^{n}_{x_{0}} =  id$. Therefore, $\langle g_{x_{0}}^{n} \rangle \subset ker(\hat{\rho})$. This implies that there exists a group homomorphism $\bar{\rho}: As(X)/\langle g_{x_{0}}^{n} \rangle \longrightarrow GL(V)$ such that $\bar{\rho}_{g_{x} \langle g_{x_{0}}^{n} \rangle} = \rho_{x}$ for all $x \in X$.\\
Now, suppose that $\rho$ is irreducible and suppose that $W$ is a subspace of $V$ such that $\bar{\rho}_{h}(W) \subset W$ for all $h \in \overline{G_{X}}$. In particular, we have that $\bar{\rho}_{g_{x} \langle g_{x_{0}}^{n} \rangle}(W) = \rho_{x}(W) \subset W$ for all $x \in X$. Since $\rho$ is irreducible, then $W=\{0\}$ or $W=V$. Therefore, $\bar{\rho}$ is also irreducible.
\end{proof}

Now, we prove that the group representation induced by a strong representation of a finite connected rack is well-defined in the set of conjugacy classes.  

\begin{theo} \label{Theorem 2.5}
    Let $X$ be a finite-connected rack, and let $(V,\rho)$  and $(V,\phi)$  be strong representations of $X$ such that $\rho \sim \phi$. Then, the induced group representations $\bar{\rho}: \overline{G_{X}} \longrightarrow GL(V)$  and $\bar{\phi}: \overline{G_{X}} \longrightarrow GL(V')$ are also equivalent.
\end{theo}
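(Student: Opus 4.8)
The plan is to transport the intertwiner witnessing $\rho\sim\phi$ directly to the quotient group $\overline{G_X}$. By hypothesis there is an invertible linear map $T\colon V\to V'$ with $\phi_x T = T\rho_x$ for every $x\in X$. The first step is to recall from Theorem \ref{Theorem 2.4} that both $\bar\rho$ and $\bar\phi$ are honestly defined on $\overline{G_X}$, with $\bar\rho_{g_x\langle g_{x_0}^n\rangle}=\rho_x$ and $\bar\phi_{g_x\langle g_{x_0}^n\rangle}=\phi_x$. Then I would observe that the elements $g_x\langle g_{x_0}^n\rangle$, $x\in X$, generate $\overline{G_X}$: indeed $As(X)$ is generated by the $g_x$ by its presentation, hence the quotient $\overline{G_X}$ is generated by their images. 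So it suffices to check the intertwining relation $\bar\phi_h T = T\bar\rho_h$ on this generating set, and there it is exactly $\phi_x T = T\rho_x$, which holds by assumption.

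The second step is the routine verification that the intertwining relation, once it holds on a generating set, holds on all of $\overline{G_X}$. Concretely, if $\bar\phi_h T = T\bar\rho_h$ and $\bar\phi_k T = T\bar\rho_k$, then $\bar\phi_{hk}T = \bar\phi_h\bar\phi_k T = \bar\phi_h T\bar\rho_k = T\bar\rho_h\bar\rho_k = T\bar\rho_{hk}$, and from $\bar\phi_h T = T\bar\rho_h$ one gets $\bar\phi_{h^{-1}}T = T\bar\rho_{h^{-1}}$ by multiplying on both sides by the appropriate inverses (using that $T$, $\bar\rho_h$, $\bar\phi_h$ are all invertible). Hence the set of $h\in\overline{G_X}$ for which the relation holds is a subgroup containing the generators, so it is all of $\overline{G_X}$. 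Since $T$ is already an isomorphism by hypothesis, this exhibits $\bar\rho\sim\bar\phi$ as group representations.

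There is essentially no serious obstacle here; the content is bookkeeping. The only point that needs a word of care is making sure the intertwiner $T$ is genuinely the \emph{same} linear map in both settings — the rack-level equivalence $\rho\sim\phi$ and the group-level equivalence $\bar\rho\sim\bar\phi$ are witnessed by one and the same $T\colon V\to V'$ — and that $\bar\rho,\bar\phi$ are well-defined (which is Theorem \ref{Theorem 2.4}, already available). If one wanted to be fully explicit, one would also note that well-definedness on $\overline{G_X}$ means the relation need only be checked on coset representatives $g_x\langle g_{x_0}^n\rangle$, and any choice of representative gives the same operator, so no consistency issue arises. I would therefore write the proof as: recall $T$, recall that $\bar\rho,\bar\phi$ exist and agree with $\rho,\phi$ on generators, check the relation on generators, extend to the whole group by the subgroup argument above, and conclude $\bar\rho\sim\bar\phi$.
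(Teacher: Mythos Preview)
Your proposal is correct and follows essentially the same approach as the paper: transport the intertwiner $T$ from the generators $g_x\langle g_{x_0}^n\rangle$ to all of $\overline{G_X}$. Your subgroup-closure formulation is slightly cleaner than the paper's version, which writes an arbitrary element explicitly as a word $g_{x_1}^{e_1}\cdots g_{x_m}^{e_m}\langle g_{x_0}^n\rangle$ (invoking the normal form from Theorem~\ref{Theorem 1.5}) and then pushes $T$ through one factor at a time, but the underlying argument is identical.
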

\begin{proof} Since  $\rho \sim \phi$ then, there exists an  isomorphism $T: V' \longrightarrow V$ such that $\rho_{x}T=T\phi_{x}$ for all $x \in X$. Let $h \in \overline{G_{X}}$, from the proof of Theorem \ref{Theorem 1.5} the element $h$ is the form $h = g_{x_{1}}^{e_{1}} \cdots g_{x_{m}}^{e_{m}} \langle g_{x_{0}}^{n} \rangle$ where $m \leq |\overline{G_{X}}|$, $x_{i} \in X$ for all $i \in \{1,\dots,m\}$ and $e_{i} \in \{0,1,\dots,n-1\}$. Then,
\begin{align*}
     \bar{\rho}_{h} T&= \bar{\rho}_{g_{x_{1}}^{e_{1}} \cdots g_{x_{m}}^{e_{m}} \langle g_{x_{0}}^{n} \rangle}T =\bar{\rho}^{e_{1}}_{g_{x_{1}} \langle g_{x_{0}}^{n} \rangle} \cdots \bar{\rho}^{e_{m}}_{g_{x_{m}}\langle g_{x_{0}}^{n} \rangle}T= \rho^{e_{1}}_{x_{1}} \cdots \rho^{e_{m}}_{x_{m}}T.
\end{align*}
Note that, for every $i \in \{1,\dots,m\}$ we have that
\begin{align*}
    \rho^{e_{i}}_{x_{i}} T &=  \rho^{e_{i} -1 }_{x_{i}} \rho_{x_{i}}T= \rho^{e_{i} -1 }_{x_{i}} T\phi_{x_{i}}= \rho^{e_{i} -2 }_{x_{i}}\rho_{x_{i}}T \phi_{x_{i}} = \rho^{e_{i} -2 }_{x_{i}}T \phi^{2}_{x_{i}}=  \cdots= T \phi^{e_{i}}_{x_{i}}.
\end{align*}
Therefore,  $\rho^{e_{1}}_{x_{1}} \cdots \rho^{e_{m}}_{x_{m}}T =\rho^{e_{1}}_{x_{1}} \cdots T\phi^{e_{m}}_{x_{m}} = T \phi^{e_{1}}_{x_{1}} \cdots \phi^{e_{m}}_{x_{m}}$. Hence, 
\begin{align*}
    \bar{\rho}_{h} T&=\rho^{e_{1}}_{x_{1}} \cdots \rho^{e_{m}}_{x_{m}}T= T \phi^{e_{1}}_{x_{1}} \cdots \phi^{e_{m}}_{x_{m}}=  T \bar{\phi}^{e_{1}}_{g_{x_{1}} \langle g_{x_{0}}^{n} \rangle} \cdots \bar{\phi}^{e_{m}}_{g_{x_{m}}\langle g_{x_{0}}^{n} \rangle}\\
    &= T\bar{\phi}_{g_{x_{1}}^{e_{1}}\langle g_{x_{0}}^{n} \rangle} \cdots \bar{\phi}_{g_{x_{m}}^{e_{m}}\langle g_{x_{0}}^{n} \rangle}= T \bar{\phi}_{g_{x_{1}}^{e_{1}} \cdots g_{x_{m}}^{e_{m}} \langle g_{x_{0}}^{n} \rangle}= T \bar{\phi}_{h}.
\end{align*}
Hence, the result follows.
\end{proof}

\begin{cor} \label{corollary 2.2}
Let $X$ be a finite connected rack, then the number of irreducible strong complex representations of $X$ (up to equivalence) is less than or equal to the number of conjugacy classes of the finite enveloping group $\overline{G_{X}}$.
\end{cor}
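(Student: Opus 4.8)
The plan is to build an injection from the set of equivalence classes of irreducible strong complex representations of $X$ into the set of equivalence classes of irreducible complex representations of the group $\overline{G_{X}}$, and then to invoke the classical fact that, for a finite group, the number of irreducible complex representations up to equivalence equals the number of conjugacy classes. Since $X$ is finite and connected, Theorem \ref{Theorem 1.5} guarantees that $\overline{G_{X}}$ is indeed finite, so this last ingredient applies.

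First I would define the assignment. Given an irreducible strong representation $(V,\rho)$ of $X$, Theorem \ref{Theorem 2.4} produces an induced representation $\bar{\rho}\colon \overline{G_{X}}\longrightarrow GL(V)$ with $\bar{\rho}_{g_{x}\langle g_{x_{0}}^{n}\rangle}=\rho_{x}$, and moreover $\bar{\rho}$ is irreducible. By Theorem \ref{Theorem 2.5}, if $\rho\sim\phi$ then $\bar{\rho}\sim\bar{\phi}$; hence the assignment $[\rho]\mapsto[\bar{\rho}]$ descends to a well-defined map $\Phi$ on equivalence classes.

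The key step is the injectivity of $\Phi$. Suppose $[\bar{\rho}]=[\bar{\phi}]$, so there is an isomorphism $T$ with $\bar{\rho}_{h}T=T\bar{\phi}_{h}$ for every $h\in\overline{G_{X}}$. Specializing to $h=g_{x}\langle g_{x_{0}}^{n}\rangle$ and using $\bar{\rho}_{g_{x}\langle g_{x_{0}}^{n}\rangle}=\rho_{x}$ and $\bar{\phi}_{g_{x}\langle g_{x_{0}}^{n}\rangle}=\phi_{x}$ gives $\rho_{x}T=T\phi_{x}$ for all $x\in X$, whence $\rho\sim\phi$, i.e. $[\rho]=[\phi]$. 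Thus distinct classes have distinct images. Combining $\Phi$ with the equality ``number of irreducible complex representations of $\overline{G_{X}}$ up to equivalence $=$ number of conjugacy classes of $\overline{G_{X}}$'' yields the stated inequality. The only subtlety worth checking is that an intertwiner for the group representations automatically restricts to an intertwiner for the underlying rack representations; this is immediate because the cosets $g_{x}\langle g_{x_{0}}^{n}\rangle$ generate $\overline{G_{X}}$ and $\bar{\rho},\bar{\phi}$ are, by construction, determined on those cosets by $\rho,\phi$, so there is no real obstacle beyond this bookkeeping. (Note the map $\Phi$ need not be surjective in general, which is why only an inequality, rather than an equality, is claimed here.)
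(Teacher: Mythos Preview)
Your proof is correct and takes essentially the same route as the paper: pass from irreducible strong rack representations to irreducible representations of the finite group $\overline{G_{X}}$ via Theorem~\ref{Theorem 2.4}, argue that non-equivalent rack representations yield non-equivalent group representations, and then invoke the classical equality between the number of irreducibles and the number of conjugacy classes. Your explicit verification of injectivity---specializing a group intertwiner $T$ to the generating cosets $g_{x}\langle g_{x_{0}}^{n}\rangle$ to recover $\rho_{x}T=T\phi_{x}$---is in fact more direct than the paper's contradiction argument, which invokes Theorem~\ref{Theorem 2.5} for the step $\rho_{i}\not\sim\rho_{j}\Rightarrow\bar{\rho}_{i}\not\sim\bar{\rho}_{j}$ even though that theorem literally gives the opposite implication; your restriction argument is precisely what fills that small gap.
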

\begin{proof} Suppose that the number of conjugacy classes of $\overline{G_{X}}$ is $k \in \n$ . From representation theory of finite groups we have that the number of irreducible complex representations (up to equivalence) of the finite enveloping group $\overline{G_{X}}$ is equal to the number of conjugacy classes of $\overline{G_{X}}$. Now, reasoning by contradiction, suppose that the number of irreducible strong representations of $X$ is  $m > k$. Let $\rho_{1},\dots,\rho_{m}$ be the distinct representatives of irreducible strong representations (up to equivalence) of $X$. Then, from Theorem \ref{Theorem 2.4} every representative $\rho_{i}$ induces an irreducible representation $\bar{\rho_{i}}$ of the group $\overline{G_{X}}$. Since $\rho_{i} \not\sim \rho_{j}$ for all $i \neq j \in \{1,2,\dots,m\}$, then from Theorem \ref{Theorem 2.5} $\bar{\rho_{i}} \not\sim \bar{\rho_{j}}$ for all $i \neq j \in \{1,2,\dots,m\}$. Therefore, $\overline{G_{X}}$ would have $m >k$ irreducible representations (up to equivalence), which is a contradiction. Thus $m \leq k$.
\end{proof}

\begin{cor}
Let $X$ be a finite connected rack whose finite enveloping group $\overline{G_{X}}$ is abelian. For every irreducible strong representation $\rho: X \longrightarrow Conj(GL(V))$ of the rack $X$, we have that the vector space $V$ is one dimensional.
\end{cor}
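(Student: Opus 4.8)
The plan is to transfer the problem to finite group representation theory via Theorem \ref{Theorem 2.4} and then invoke the classical fact that the complex irreducible representations of a finite abelian group are one-dimensional.

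First I would apply Theorem \ref{Theorem 2.4}: since $(V,\rho)$ is a strong and irreducible representation of the finite-connected rack $X$, it induces an \emph{irreducible} representation $\bar{\rho}: \overline{G_{X}} \longrightarrow GL(V)$ of the finite enveloping group $\overline{G_{X}}$, with $\bar{\rho}_{g_{x}\langle g_{x_{0}}^{n}\rangle} = \rho_{x}$ for all $x \in X$. Assuming $V \neq \{0\}$, I would next check that $V$ is finite-dimensional so that the standard theory applies: picking $v \in V \setminus \{0\}$, the subspace $W$ generated by $\{\bar{\rho}_{h}(v) : h \in \overline{G_{X}}\}$ is finite-dimensional (because $\overline{G_{X}}$ is finite by Theorem \ref{Theorem 1.5}), nonzero and $\overline{G_{X}}$-invariant, so irreducibility of $\bar{\rho}$ forces $W = V$ and hence $\dim V < \infty$.

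Finally, since $\overline{G_{X}}$ is abelian, each operator $\bar{\rho}_{h}$ commutes with every $\bar{\rho}_{h'}$, so $\bar{\rho}_{h}$ is an endomorphism of the finite-dimensional irreducible $\overline{G_{X}}$-module $V$; by Schur's lemma over $\mathbb{C}$ it must therefore be a scalar operator. Consequently every linear subspace of $V$ is $\overline{G_{X}}$-invariant, and the irreducibility of $\bar{\rho}$ then leaves only the possibility $\dim V = 1$, which is exactly the asserted conclusion for the rack representation $\rho$.

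I do not expect a genuine obstacle here, since every step is either a direct citation of Theorem \ref{Theorem 2.4} or a textbook argument in finite group representation theory; the only point that deserves a moment of care is establishing $\dim V < \infty$ before invoking Schur's lemma, which the orbit-span argument above settles cleanly.
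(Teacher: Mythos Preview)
Your proposal is correct and follows essentially the same approach as the paper: apply Theorem \ref{Theorem 2.4} to pass to an irreducible representation of the finite abelian group $\overline{G_{X}}$, then conclude $\dim V = 1$ from the standard fact about irreducible representations of abelian groups. Your version is in fact a bit more careful than the paper's, since you explicitly justify $\dim V < \infty$ via the orbit-span argument before invoking Schur's lemma, whereas the paper simply asserts the conclusion directly.
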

\begin{proof} Let $\rho: X \longrightarrow Conj(GL(V))$ be an irreducible strong representation of $X$. From Theorem \ref{Theorem 2.4}, $\rho$ induces a irreducible representation $\bar{\rho}:  \overline{G_{X}} \longrightarrow GL(V)$ such that $\bar{\rho}_{g_{x} \langle g_{{x_{0}}}^{n} \rangle}$ = $\rho_{x}$ for all $x \in X$. Since $\overline{G_{X}}$ is an abelian group and $\bar{\rho}$ is irreducible, then $V$ must be of dimension one.
\end{proof}
Now, we consider the question if any group representation of $\overline{G_{X}}$, when $X$ is a finite-connected rack, can be lifted to a strong-representation of $X$. A partial answer is given in the next theorem.  

\begin{theo}\label{Theorem 2.6}
  Let $X$ be a finite connected rack. Let $\bar{\rho}: \overline{G_{X}} \longrightarrow GL(V)$ a representation of the finite enveloping group of $X$. Consider the function $\rho: X \longrightarrow Conj(GL(V))$ defined by $\rho_{x}:= \bar{\rho}_{g_{x} \langle g_{x_{0}}^{n} \rangle}$ for all $x \in X$. Then $\rho$ is a representation of the rack $X$. Furthermore, if $\bar{\rho}$ is irreducible, $\rho$ is too.
\end{theo}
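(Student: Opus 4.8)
The plan is to build $\rho$ as the composite of the canonical projection $X \to Conj(\overline{G_{X}})$ with $\bar{\rho}$, and then to deduce irreducibility from the finiteness of $\overline{G_{X}}$.

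First I would check that $\rho$ is a rack representation. Each $\rho_{x}=\bar{\rho}_{g_{x}\langle g_{x_{0}}^{n}\rangle}$ lies in $GL(V)$ because $\bar{\rho}$ takes values in $GL(V)$. For the homomorphism property, recall that in $As(X)$ we have $g_{x\vartriangleright y}=g_{x}g_{y}g_{x}^{-1}$ (the presentation of $As(X)$), so passing to the quotient $\overline{G_{X}}=As(X)/\langle g_{x_{0}}^{n}\rangle$ gives $g_{x\vartriangleright y}\langle g_{x_{0}}^{n}\rangle=\bigl(g_{x}\langle g_{x_{0}}^{n}\rangle\bigr)\bigl(g_{y}\langle g_{x_{0}}^{n}\rangle\bigr)\bigl(g_{x}\langle g_{x_{0}}^{n}\rangle\bigr)^{-1}$. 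Applying the group homomorphism $\bar{\rho}$ to both sides yields $\rho_{x\vartriangleright y}=\rho_{x}\rho_{y}\rho_{x}^{-1}$, which is exactly the condition for $\rho\colon X\to Conj(GL(V))$ to be a rack homomorphism. Conceptually this is just the statement that the projection $q\colon As(X)\to\overline{G_{X}}$ induces, via Theorem \ref{Theorem 1.2} and functoriality of $Conj$, a rack homomorphism $X\to Conj(\overline{G_{X}})$, which one then composes with the rack homomorphism $Conj(\overline{G_{X}})\to Conj(GL(V))$ coming from $\bar{\rho}$.

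For the second assertion, assume $\bar{\rho}$ is irreducible and let $W\subseteq V$ satisfy $\rho_{x}(W)\subseteq W$ for all $x\in X$; the goal is to promote this to $\bar{\rho}_{h}(W)\subseteq W$ for every $h\in\overline{G_{X}}$. The elements $g_{x}\langle g_{x_{0}}^{n}\rangle$, $x\in X$, generate $\overline{G_{X}}$, since $\{g_{x}:x\in X\}$ generates $As(X)$ and $q$ is surjective. By Theorem \ref{Theorem 1.5} the group $\overline{G_{X}}$ is finite, so each generator $g_{x}\langle g_{x_{0}}^{n}\rangle$ has some finite order $m_{x}$; hence $\rho_{x}^{m_{x}}=\bar{\rho}_{(g_{x}\langle g_{x_{0}}^{n}\rangle)^{m_{x}}}=id$, so $\rho_{x}^{-1}=\rho_{x}^{m_{x}-1}$ also maps $W$ into $W$. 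Writing an arbitrary $h\in\overline{G_{X}}$ as a word in the $g_{x}\langle g_{x_{0}}^{n}\rangle$ and their inverses, $\bar{\rho}_{h}$ becomes a product of the operators $\rho_{x},\rho_{x}^{-1}$, each of which preserves $W$; thus $\bar{\rho}_{h}(W)\subseteq W$. Irreducibility of $\bar{\rho}$ now forces $W=\{0\}$ or $W=V$, so $\rho$ is irreducible.

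The argument is largely bookkeeping; the one essential ingredient is the finiteness of $\overline{G_{X}}$ (Theorem \ref{Theorem 1.5}), which is precisely what lets us pass from $\rho_{x}$-invariance of $W$ to $\rho_{x}^{-1}$-invariance without assuming $V$ finite-dimensional, and hence close the loop in the irreducibility step. I would also note explicitly that the theorem does not assert that $\rho$ is \emph{strong}---this is why it is only a partial answer to the lifting question---because a relation $L_{x_{k}}\cdots L_{x_{1}}=id$ in $Inn(X)$ need not entail $g_{x_{k}}\cdots g_{x_{1}}\in\langle g_{x_{0}}^{n}\rangle$ in $\overline{G_{X}}$.
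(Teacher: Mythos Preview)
Your proof is correct and follows essentially the same route as the paper's. The only organizational difference is in the irreducibility step: the paper writes an arbitrary $h\in\overline{G_{X}}$ in the normal form $g_{x_{1}}^{e_{1}}\cdots g_{x_{m}}^{e_{m}}\langle g_{x_{0}}^{n}\rangle$ with $e_{i}\in\{0,\dots,n-1\}$ (invoking the proof of Theorem~\ref{Theorem 1.5}) and then checks $\rho_{x}^{k}(W)\subseteq W$ for $k\ge 0$ by induction, whereas you bypass the normal form and simply use finiteness of $\overline{G_{X}}$ to write $\rho_{x}^{-1}$ as a positive power of $\rho_{x}$. Both arguments exploit exactly the same fact---that in a finite group inverses of generators are positive powers---so the difference is cosmetic; your version is arguably cleaner since it does not need the specific word reduction from Theorem~\ref{Theorem 1.5}.
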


\begin{proof} Let us see that $\rho$ is well defined. Let $x,y \in X$ such that $x=y$. Therefore, $g_{x} \langle g_{x_{0}}^{n} \rangle = g_{y} \langle g_{x_{0}}^{n} \rangle $. It follows that $\rho_{x} = \bar{\rho}_{g_{x} \langle g_{x_{0}}^{n} \rangle} = \bar{\rho}_{g_{y} \langle g_{x_{0}}^{n} \rangle} = \rho_{y}$, then $\rho$ is well defined. From the definition of  $\overline{G_{X}}$,  we have the relation $ g_{x \vartriangleright y} \langle g_{x_{0}}^{n} \rangle = g_{x}g_{y}g_{x}^{-1} \langle g_{x_{0}}^{n} \rangle$, for all $x,y \in X$. Then,
\begin{align*}
    \rho_{x \vartriangleright y} = \bar{\rho}_{g_{x \vartriangleright y} \langle g_{x_{0}}^{n} \rangle} = \bar{\rho}_{g_{x}g_{y}g_{x}^{-1}\langle g_{x_{0}}^{n} \rangle} = \bar{\rho}_{g_{x}\langle g_{x_{0}}^{n} \rangle}\bar{\rho}_{g_{y} \langle g_{x_{0}}^{n} \rangle} \bar{\rho}^{-1}_{g_{x} \langle g_{x_{0}}^{n} \rangle} = \rho_{x}\rho_{y}\rho^{-1}_{x}.
\end{align*}
Therefore, $\rho$ is a representation of the rack $X$. Suppose that $\bar{\rho}$ is irreducible. Let $W$ be a subspace of $V$ such that $\rho_{x}(W) \subset W$ for all $x \in X$. First, we claim that $\rho^{k}_{x}(W) \subset W$ for all $k \in \n$ and all $x \in X$. Indeed, suppose that for every $x \in X$ we have $\rho^{t}_{x}(W) \subset W$ for some $t \in \n$. Note that $\rho_{x}^{t+1}(W)= \rho_{x}\rho^{t}_{x}(W) = \rho_{x}[\rho^{t}_{x}(W)]$, since
$\rho^{t}_{x}(W) \subset W$,  then  we have $\rho_{x}^{t+1}(W)= \rho_{x}[\rho^{t}_{x}(W)] \subset W$, so the result follows by induction.\\
Let $h \in \overline{G_{X}}$, from the proof of Theorem \ref{Theorem 1.5}, the element $h$ is the form $h = g_{x_{1}}^{e_{1}} \cdots g_{x_{m}}^{e_{m}} \langle g_{x_{0}}^{n} \rangle$ where $m \leq |\overline{G_{X}}|$, $x_{i} \in X$ for all $i \in \{1,\dots,m\}$ and $e_{i} \in \{0,1,\dots,n-1\}$. Then,
\begin{align*}
     \bar{\rho}_{h}(W) &= \bar{\rho}_{g_{x_{1}}^{e_{1}} \cdots g_{x_{m}}^{e_{m}} \langle g_{x_{0}}^{n} \rangle}(W)= \bar{\rho}^{e_{1}}_{g_{x_{1}} \langle g_{x_{0}}^{n} \rangle} \cdots \bar{\rho}^{e_{m}}_{g_{x_{m}} \langle g_{x_{0}}^{n} \rangle}(W)= \rho^{e_{1}}_{x_{1}} \cdots \rho^{e_{m}}_{x_{m}}(W).
\end{align*}

Since $\rho^{e_{1}}_{x_{i}}(W)  \subset W$ and $\rho^{e_{2}}_{x_{i}}(W)  \subset W$ then, $\rho^{e_{1}}_{x_{1}}\rho^{e_{2}}_{x_{2}}(W) = \rho^{e_{1}}_{x_{1}}[\rho^{e_{2}}_{x_{2}}(W)] \subset W.$

Suppose that $\rho^{e_{1}}_{x_{1}} \cdots \rho^{e_{i}}_{x_{i}}(W) \subset W$ for some $i \in \{2,3,..,m-1\}$. Since $\rho^{e_{i+1}}_{x_{i+1}}(W)  \subset W$ it follows that $ \rho^{e_{1}}_{x_{1}} \cdots \rho^{e_{i}}_{x_{i}}\rho^{e_{i+1}}_{x_{i+1}}(W) = (\rho^{e_{1}}_{x_{1}} \cdots \rho^{e_{i}}_{x_{i}})[\rho^{e_{i+1}}_{x_{i+1}}(W)] \subset W$. Thus, from induction over $i$ we have $\rho^{e_{1}}_{x_{1}} \cdots \rho^{e_{m}}_{x_{m}}(W) \subset W$. Therefore, $\bar{\rho}_{h}(W)  \subset W$ for all $h \in \overline{G_{X}}$. Since $\bar{\rho}$ is irreducible, then $W=\{0\}$ or $W=V$. Hence, $\rho$ is an irreducible representation of $X$.
\end{proof}

We have a result similar to Theorem \ref{Theorem 2.5}.
  
\begin{theo} \label{Theorem 2.7}
  Let $X$ be a finite-connected rack. Let $\bar{\rho}: \overline{G_{X}} \longrightarrow GL(V)$  and $\bar{\phi}: \overline{G_{X}} \longrightarrow GL(V')$  be representations of the group $\overline{G_{X}}$ such that $\bar{\rho} \sim \bar{\phi}$. Then, the rack representations $\rho: X \longrightarrow Conj(GL(V))$  and, $\phi: X \longrightarrow Conj(GL(V'))$ defined as in the previous theorem, are also equivalent.
\end{theo}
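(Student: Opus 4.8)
The plan is to run the argument of Theorem \ref{Theorem 2.5} in reverse, observing that the present situation is actually simpler: the equivalence of the group representations is assumed on \emph{all} of $\overline{G_{X}}$, hence in particular on the distinguished cosets $g_{x}\langle g_{x_{0}}^{n}\rangle$, and these are exactly the elements through which $\rho$ and $\phi$ were defined in Theorem \ref{Theorem 2.6}.

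First I would unfold the hypothesis $\bar{\rho}\sim\bar{\phi}$: by the definition of equivalence of representations there is a linear isomorphism $T\colon V'\longrightarrow V$ such that $\bar{\rho}_{h}\,T = T\,\bar{\phi}_{h}$ for every $h\in\overline{G_{X}}$. Next I would specialize $h$ to the coset $g_{x}\langle g_{x_{0}}^{n}\rangle$ for each $x\in X$. Since, by the construction in Theorem \ref{Theorem 2.6}, $\rho_{x}=\bar{\rho}_{g_{x}\langle g_{x_{0}}^{n}\rangle}$ and $\phi_{x}=\bar{\phi}_{g_{x}\langle g_{x_{0}}^{n}\rangle}$, the identity above becomes $\rho_{x}\,T = T\,\phi_{x}$ for all $x\in X$; that is, the square defining $X$-linearity commutes. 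As $T$ is an isomorphism, this is precisely the statement $\rho\sim\phi$, and the proof is complete.

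I would then add the remark that, in contrast with the proof of Theorem \ref{Theorem 2.5}, no appeal to the normal form for elements of $\overline{G_{X}}$ from Theorem \ref{Theorem 1.5} is needed here, since one only evaluates the intertwining relation on generators; the one substantive ingredient is Theorem \ref{Theorem 2.6}, which ensures that $\rho$ and $\phi$ are genuine rack representations to begin with. There is essentially no obstacle: the entire content is the bookkeeping observation that the same intertwiner $T$ serves both pairs. The only point requiring mild care is to invoke $T$ on the correct side, from $V'$ to $V$, so that the derived identity $\rho_{x}T=T\phi_{x}$ matches the convention fixed in the definition of equivalence of rack representations earlier in the paper.
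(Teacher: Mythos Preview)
Your proof is correct and matches the paper's own argument almost verbatim: take an intertwiner $T$ witnessing $\bar{\rho}\sim\bar{\phi}$, specialize the relation $\bar{\rho}_{h}T=T\bar{\phi}_{h}$ to $h=g_{x}\langle g_{x_{0}}^{n}\rangle$, and read off $\rho_{x}T=T\phi_{x}$. Your added remark that no normal form from Theorem~\ref{Theorem 1.5} is needed here is accurate and a nice clarification, though the paper does not state it explicitly.
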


\begin{proof} Since $\bar{\rho} \sim \bar{\phi}$ then there exists an isomorphism $T: V' \longrightarrow V$ such that $\bar{\rho}_{h}T = T \bar{\phi}_{h} $ for all $h \in \overline{G_{X}}$. In particular, $\bar{\rho}_{g_{x} \langle g_{x_{0}}^{n} \rangle }T = T \bar{\phi}_{g_{x} \langle g_{x_{0}}^{n} \rangle} $ for all $x \in X$. Thus, $\rho_{x} T = \bar{\rho}_{g_{x} \langle g_{x_{0}}^{n} \rangle }T = T \bar{\phi}_{g_{x} \langle g_{x_{0}}^{n} \rangle} = T \phi_{x}$ for all $x \in X$.Therefore, $\rho \sim \phi$.
 \end{proof}

From Theorem \ref{Theorem 2.6}, we can obtain a representation of a finite connected rack $X$ from its finite enveloping group $\overline{G_{X}}$. This representation may not necessarily be strong, but under certain conditions we can ensure this property.

\begin{theo}\label{Theorem 2.8}
  Let $X$ be a finite connected rack and $\bar{\rho}: \overline{G_{X}} \longrightarrow GL(V)$ be a representation of the finite enveloping group $\overline{G_{X}}$. If $\overline{G_{X}}$ has trivial center, that is, $Z(\overline{G_{X}})= \{1\}$ then the rack representation $\rho: X \longrightarrow Conj(GL(V))$ defined by $\rho_{x} := \bar{\rho}_{g_{x} \langle g_{x_{0}}^{n} \rangle}$, is strong.
\end{theo}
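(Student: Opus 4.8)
The plan is to reduce the strongness of $\rho$ to a structural fact about the finite enveloping group: when $Z(\overline{G_{X}})=\{1\}$, the group $\overline{G_{X}}$ is isomorphic to $Inn(X)$, and once this is known the conclusion is immediate. First I would recall the surjective homomorphism $\hat{L}\colon As(X)\longrightarrow Inn(X)$, $g_{x}\longmapsto L_{x}$, discussed in Section~\ref{Section 1.2}. Since $n$ is the common order of all the permutations $L_{x}$ (Theorem~\ref{Theorem 1.4}), we have $\hat{L}(g_{x_{0}}^{n})=L_{x_{0}}^{n}=id$, so $\langle g_{x_{0}}^{n}\rangle\subseteq \ker\hat{L}$ and $\hat{L}$ descends to a surjective homomorphism $\overline{L}\colon \overline{G_{X}}\longrightarrow Inn(X)$, $g_{x}\langle g_{x_{0}}^{n}\rangle\longmapsto L_{x}$. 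Write $\bar g_{x}:=g_{x}\langle g_{x_{0}}^{n}\rangle$.

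The key step is to show $\ker\overline{L}\subseteq Z(\overline{G_{X}})$. Starting from the defining relation $\bar g_{x}\bar g_{y}\bar g_{x}^{-1}=\bar g_{x\vartriangleright y}$ together with its inverse form $\bar g_{x}^{-1}\bar g_{y}\bar g_{x}=\bar g_{x\vartriangleright^{-1}y}$ (Remark~2.1), a straightforward induction on the word length of $w$ yields, for every $w\in\overline{G_{X}}$ and every $y\in X$,
\[
w\,\bar g_{y}\,w^{-1}=\bar g_{\overline{L}(w)(y)}.
\]
Consequently, if $h\in\ker\overline{L}$ then $\overline{L}(h)=id$, so $h\bar g_{y}h^{-1}=\bar g_{y}$ for all $y\in X$; since $\{\bar g_{y}\mid y\in X\}$ generates $\overline{G_{X}}$, the element $h$ commutes with every element of $\overline{G_{X}}$, i.e.\ $h\in Z(\overline{G_{X}})$. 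Using the hypothesis $Z(\overline{G_{X}})=\{1\}$, this forces $\ker\overline{L}=\{1\}$, so $\overline{L}$ is an isomorphism $\overline{G_{X}}\cong Inn(X)$.

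It remains to verify the strongness condition. Let $\{u_{1},\dots,u_{m}\}$ be any stabilizing family of $X$, so that $L_{u_{m}}L_{u_{m-1}}\cdots L_{u_{1}}=id$ in $Inn(X)$. Applying the isomorphism $\overline{L}^{-1}$ gives $\bar g_{u_{m}}\bar g_{u_{m-1}}\cdots\bar g_{u_{1}}=1$ in $\overline{G_{X}}$. Therefore
\[
\rho_{u_{m}}\rho_{u_{m-1}}\cdots\rho_{u_{1}}
=\bar\rho_{\bar g_{u_{m}}}\bar\rho_{\bar g_{u_{m-1}}}\cdots\bar\rho_{\bar g_{u_{1}}}
=\bar\rho_{\bar g_{u_{m}}\bar g_{u_{m-1}}\cdots\bar g_{u_{1}}}
=\bar\rho_{1}=id_{V},
\]
which is exactly the definition of $\rho$ being a strong representation. (Note that no extra argument is needed to ``kill'' $\langle g_{x_{0}}^{n}\rangle$, since $\bar\rho$ is a representation of $\overline{G_{X}}$ by hypothesis.)

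The main obstacle is the second paragraph, namely proving that $\ker\overline{L}$ is central; the conjugation identity $w\bar g_{y}w^{-1}=\bar g_{\overline{L}(w)(y)}$ is the heart of the matter. The induction itself is routine once one has $\bar g_{x}\bar g_{y}\bar g_{x}^{-1}=\bar g_{x\vartriangleright y}$, the only care being to track the $\pm 1$ exponents in a word representing $w$ and to invoke the relation $\bar g_{x}^{-1}\bar g_{y}\bar g_{x}=\bar g_{x\vartriangleright^{-1}y}$ at each negative exponent; everything else is bookkeeping.
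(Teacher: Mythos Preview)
Your proof is correct, and at its core it is the same computation as the paper's: both hinge on the conjugation identity $w\,\bar g_{y}\,w^{-1}=\bar g_{\overline{L}(w)(y)}$ (which the paper unwinds explicitly for the particular word $w=\bar g_{x_k}\cdots\bar g_{x_1}$ coming from a stabilizing family), and both conclude that the product $\bar g_{x_k}\cdots\bar g_{x_1}$ lands in $Z(\overline{G_X})$ and is therefore trivial.

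The packaging, however, is genuinely different. The paper argues pointwise: fix a stabilizing family, expand $g_{x}=g_{x_k\vartriangleright(\cdots(x_1\vartriangleright x)\cdots)}$ using the defining relation of $As(X)$, and read off that $g_{x_k}\cdots g_{x_1}$ commutes with every generator $g_x$. You instead prove once and for all the structural fact $\ker\overline{L}\subseteq Z(\overline{G_X})$, deduce that $\overline{L}\colon\overline{G_X}\to Inn(X)$ is an isomorphism under the trivial-center hypothesis, and then pull the stabilizing relation $L_{u_m}\cdots L_{u_1}=id$ back through $\overline{L}^{-1}$. Your route yields the same conclusion with no extra work but also delivers the bonus statement $\overline{G_X}\cong Inn(X)$ when $Z(\overline{G_X})=\{1\}$, which the paper does not isolate; conversely, the paper's version avoids introducing $\overline{L}$ at all and stays entirely inside $As(X)$ and its quotient.
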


\begin{proof} Let $\{x_{1},\dots,x_{k}\}$ be a stabilizing family of the rack $X$. That means, $x_{k} \vartriangleright ( x_{n-1} \vartriangleright (\cdots(x_{1} \vartriangleright x) \cdots)) = x $ for all $x \in X$. Since we have the relation $g_{x \vartriangleright y} = g_{x}g_{y}g_{x}^{-1}$, for all $x,y \in X$, in the associated group $As(X)$ then for every $x \in X$ we have
\begin{align*}
     g_{x} &=g_{x_{k} \vartriangleright ( x_{k-1} \vartriangleright (\cdots(x_{1} \vartriangleright x) \cdots))} = g_{x_{k}}g_{x_{k-1} \vartriangleright (\cdots(x_{1} \vartriangleright x) \cdots)} g_{x_{k}}^{-1}\\
    &=g_{x_{k}}g_{x_{k-1}}g_{x_{k-2} \vartriangleright (\cdots(x_{1} \vartriangleright x) \cdots)}g_{x_{k-1}}^{-1}g_{x_{k}}^{-1} \\
    & \vdots\\
    &= (g_{x_{k}} \cdots g_{x_{1}})g_{x}(g_{x_{1}}^{-1} \cdots g_{x_{k}}^{-1}).
\end{align*}
Therefore, $(g_{x_{k}} \cdots g_{x_{1}})g_{x} = g_{x}(g_{x_{k}} \cdots g_{x_{1}})$. Thus, the word $g_{x_{k}}g_{x_{k-1}} \cdots g_{x_{1}}$ belongs to the center of the group $As(X)$ and therefore $g_{x_{k}}g_{x_{k-1}} \cdots g_{x_{1}} \langle g_{x_{0}}^{n} \rangle \in Z(\overline{G_{X}})$. Since $Z(\overline{G_{X}})= \{1\}$, then  $g_{x_{k}}g_{x_{k-1}} \cdots g_{x_{1}} \langle g_{x_{0}}^{n} \rangle = 1$.\\
Note that,
\begin{center}
    $\rho_{x_{k}} \cdots \rho_{x_{1}} = \bar{\rho}_{g_{x_{k}}\langle x^{n}_{0}} \rangle \cdots \bar{\rho}_{g_{x_{1}}\langle g_{x_{0}}^{n} \rangle} = \bar{\rho}_{g_{x_{k}}g_{x_{k-1}} \cdots g_{x_{1}} \langle g_{x_{0}}^{n} \rangle} = \bar{\rho}_{1} = id$.
\end{center}
 Hence, the representation $\rho$ is strong.
\end{proof}
\begin{cor} \label{Collorary 2.3}
Let $X$ be a finite-connected rack. If $Z(\overline{G_{X}}) = \{1\}$, then the number of irreducible strong complex representations of $X$ (up to equivalence) is equal to the number of irreducible complex representations of the group $\overline{G_{X}}$.
\end{cor}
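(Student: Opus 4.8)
The plan is to assemble Theorems \ref{Theorem 2.4}--\ref{Theorem 2.8} into a bijection between the set of equivalence classes of irreducible strong complex representations of $X$ and the set of equivalence classes of irreducible complex representations of $\overline{G_X}$. Since $\overline{G_X}$ is finite (Theorem \ref{Theorem 1.5}), the latter set is finite, and the corollary follows at once. Note that one of the two inequalities, namely that the number of irreducible strong representations of $X$ is at most the number of irreducible complex representations of $\overline{G_X}$, is already Corollary \ref{corollary 2.2}; the content to be added is the reverse inequality, or equivalently the surjectivity of the assignment $\rho\mapsto\bar\rho$ on equivalence classes.

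First I would make precise the two maps. Let $\Phi$ send the class of a strong representation $(V,\rho)$ of $X$ to the class of the induced representation $(V,\bar\rho)$ of $\overline{G_X}$ from Theorem \ref{Theorem 2.4}; this is well defined, landing in irreducible classes by Theorem \ref{Theorem 2.4} and respecting equivalence by Theorem \ref{Theorem 2.5}. In the opposite direction, let $\Psi$ send the class of an irreducible representation $(V,\bar\rho)$ of $\overline{G_X}$ to the class of the rack representation $(V,\rho)$ with $\rho_x:=\bar\rho_{g_x\langle g_{x_0}^n\rangle}$: Theorem \ref{Theorem 2.6} tells us this is a well-defined irreducible rack representation, the hypothesis $Z(\overline{G_X})=\{1\}$ together with Theorem \ref{Theorem 2.8} tells us it is strong, and Theorem \ref{Theorem 2.7} tells us the assignment descends to equivalence classes.

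It then remains to verify $\Psi\circ\Phi=\mathrm{id}$ and $\Phi\circ\Psi=\mathrm{id}$. The first is immediate: for a strong representation $\rho$, the induced $\bar\rho$ satisfies $\bar\rho_{g_x\langle g_{x_0}^n\rangle}=\rho_x$ by its very construction, so $\Psi(\Phi(\rho))=\rho$ literally. For the second, given $\bar\rho$ and setting $\rho:=\Psi(\bar\rho)$, one forms the group homomorphism $\widehat\rho\colon As(X)\to GL(V)$ of Theorem \ref{Theorem 1.2} and its factorization $\overline{\rho}$ through $\overline{G_X}$; since $\widehat\rho(g_x)=\rho_x=\bar\rho_{g_x\langle g_{x_0}^n\rangle}$, the two homomorphisms $\overline{\rho}$ and $\bar\rho$ coincide on each generator $g_x\langle g_{x_0}^n\rangle$ of $\overline{G_X}$, hence are equal. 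Therefore $\Phi$ is a bijection on equivalence classes and the two counts agree.

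The only subtle point is this last coincidence: it uses the uniqueness assertion in the universal property (Theorem \ref{Theorem 1.2}) and the fact that $\{g_x\langle g_{x_0}^n\rangle : x\in X\}$ generates $\overline{G_X}$, which is read off from the presentation of $As(X)$, hence of $\overline{G_X}$. All the remaining steps are direct invocations of the earlier theorems, with the hypothesis $Z(\overline{G_X})=\{1\}$ used exactly once, to ensure that the rack representation produced by $\Psi$ is strong rather than merely a representation.
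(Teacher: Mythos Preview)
Your proposal is correct and follows the same approach as the paper: the paper's proof is a single sentence asserting that ``from previous theorems, there exists a bijective correspondence between irreducible strong representations of the rack $X$ and the irreducible representations of the group $\overline{G_{X}}$,'' and you have spelled out precisely how Theorems \ref{Theorem 2.4}--\ref{Theorem 2.8} assemble into that bijection, including the verification that $\Phi$ and $\Psi$ are mutually inverse on equivalence classes. The only addition you make beyond the paper is the explicit check that $\Phi\circ\Psi=\mathrm{id}$ via agreement on generators, which the paper leaves entirely to the reader.
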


\begin{proof} From previous theorems, there exists a bijective correspondence between irreducible strong representations of the rack $X$ and the irreducible representations of the group $\overline{G_{X}}$.

 \end{proof}

\section{Examples and counterexamples}
  In this section, we give some illustrative examples of the results given in the previous one. We also present a counterexample to \cite[Theorem~9.11]{Elhamdadi}.

\begin{eg} \label{Example 2.10}
  Consider the permutation quandle $\mathbb{P}_{3}$. In Example \ref{Example 1.11} we found the regular representation of $\mathbb{P}_{3}$, which is $\lambda: \mathbb{P}_{3} \longrightarrow Conj(GL(3,\c))$ defined by 
  \begin{align*}
       \lambda_{(2 \ 3)}=  \left[
             \begin{matrix}
              1 & 0 & 0 \\
              0 & 0 & 1\\
              0 & 1 & 0
             \end{matrix}
                   \right], \ 
        \lambda_{(1 \ 3)}=  \left[
             \begin{matrix}
              0 & 0 & 1 \\
              0 & 1 & 0\\
              1 & 0 & 0
             \end{matrix}
                   \right],
        \lambda_{(1 \ 2)}=  \left[
             \begin{matrix}
              0 & 1 & 0 \\
              1 & 0 & 0\\
              0 & 0 & 1
             \end{matrix}
                   \right].
\end{align*}
Remark that $\overline{G_{\mathbb{P}_{3}}} \cong \mathbb{S}_{3}$ (see Proposition \ref{proposition 1.4}) where the isomorphism $\mu: \overline{G_{\mathbb{P}_{3}}} \longrightarrow \mathbb{S}_{3}$ is given by $\mu_{g_{(ij)} \langle g_{(12)}^{2} \rangle} = (i \ j)$ for all $(i \ j) \in \mathbb{P}_{3}$. From Proposition \ref{Proposition 2.1} the regular representation $\lambda$ is strong. Then it induces a representation $\bar{\lambda}: \mathbb{S}_{3} \longrightarrow GL(3,\c)$ of the finite enveloping group $\overline{G_{\mathbb{P}_{3}}} \cong \mathbb{S}_{3}$ defined by
\begin{align*}
       \bar{\lambda}_{(2 \ 3)}=  \left[
             \begin{matrix}
              1 & 0 & 0 \\
              0 & 0 & 1\\
              0 & 1 & 0
             \end{matrix}
                   \right], \ 
        \bar{\lambda}_{(1 \ 3)}=  \left[
             \begin{matrix}
              0 & 0 & 1 \\
              0 & 1 & 0\\
              1 & 0 & 0
             \end{matrix}
                   \right],
        \bar{\lambda}_{(1 \ 2)}=  \left[
             \begin{matrix}
              0 & 1 & 0 \\
              1 & 0 & 0\\
              0 & 0 & 1
             \end{matrix}
                   \right].
\end{align*}
Note that, the subspace $W= span\left\{ \begin{bmatrix}
1 \\
1 \\
1 \\
\end{bmatrix}\right\}
$ is invariant under the representation $\bar{\lambda}$. Thus, this representation is reducible and decomposable. From the representation theory of finite groups, we know that every representation can be written as an unique (up to equivalence)  direct sum of irreducible representations. Specifically, we have that $\bar{\lambda} \sim \bar{\phi} \oplus \bar{\psi}$, where  $\bar{\phi}: \mathbb{S}_{3} \longrightarrow \c^{*}$  and $\bar{\psi}: \mathbb{S}_{3} \longrightarrow GL(2,\c)$  are irreducible representations defined by  $\bar{\phi}(g)=1$ for all $g \in \mathbb{S}_{3}$ and
\begin{align*}
       \bar{\psi}_{(1 \ 2)}:= \left[
             \begin{matrix}
              -1 & -1 \\
              0 & 1 \\
              \end{matrix}
                   \right], \  \bar{\psi}_{(1 \ 3)}:= \left[
             \begin{matrix}
              1 & 0 \\
              -1 & -1 \\
              \end{matrix}
                   \right],\  \bar{\psi}_{(2 \ 3)}:= \left[
             \begin{matrix}
              0 & 1 \\
              1 & 0 \\
              \end{matrix}
                   \right].
\end{align*}
From Theorem \ref{Theorem 2.6}, the group representations $\bar{\phi}$ and $\bar{\psi}$ induce rack representations $\phi: \mathbb{P}_{3} \longrightarrow Conj(GL(\c^{*}))$ and $\psi: \mathbb{P}_{3} \longrightarrow Conj(GL(2,\c))$ defined by $\phi_{x} := 1$ for all $x \in \mathbb{P}_{3}$ and
\begin{align*}
       \psi_{(1 \ 2)}:= \left[
             \begin{matrix}
              -1 & -1 \\
              0 & 1 \\
              \end{matrix}
                   \right], \  \psi_{(1 \ 3)}:= \left[
             \begin{matrix}
              1 & 0 \\
              -1 & -1 \\
              \end{matrix}
                   \right],\  \psi_{(2 \ 3)}:= \left[
             \begin{matrix}
              0 & 1 \\
              1 & 0 \\
              \end{matrix}
                   \right].
\end{align*}
From Theorem \ref{Theorem 2.7}, we have that $\lambda \sim \phi \oplus \psi$. Indeed, let $T= \left[
             \begin{matrix}
              1 & 1 & 1\\
              1 & -2 & 1 \\
              1 & -1 & -2
              \end{matrix}
                   \right]. $
It can be checked that $(\phi \oplus \psi)_{x} T= T\lambda_{x}$ for all $x \in \mathbb{P}_{3}$.\\
Since the finite enveloping group $\overline{G_{\mathbb{P}_{3}}} \cong \mathbb{S}_{3}$ has trivial center, then by Theorem \ref{Theorem 2.8} the representations $\phi$ and $\psi$ are strong.\\
It is well known that the group $\mathbb{S}_{3}$ has three (up to equivalence) irreducible representations, then  from Corollary \ref{Collorary 2.3}, the number of irreducible strong representations of $\mathbb{P}_{3}$ is equal to $3$. Previously, we found two irreducible strong representations of $\mathbb{P}_{3}$, from the knowledge of representations of finite groups we can find the last one. The other irreducible representation of $\mathbb{S}_{3}$ is $\bar{\tau}: \mathbb{S}_{3} \longrightarrow GL(\c^{*}) $ defined by
\begin{center}
    $\bar{\tau}_{\sigma} := \left\{ 
    \begin{array}{lcc}
             1 &   if  & \sigma  \ is \ even \\
              -1 &  if & \sigma  \ is \ odd \\
               \end{array} \right.$
\end{center}
Thus, we have a rack representation $\tau: \mathbb{P}_{3} \longrightarrow Conj(GL(\c^{*}))$ defined by
\begin{center}
    $\tau_{(2 \ 3)}:= \bar{\tau}_{g_{(23)} \langle g_{(12)}^{2}\rangle} = \bar{\tau}_{(2 \ 3)} = -1$, \\ 
    $\tau_{(1 \ 3)}:= \bar{\tau}_{g_{(13)}\langle g_{(12)}^{2}\rangle} = \bar{\tau}_{(1 \ 3)} = -1$,\\
     $\tau_{(1 \ 2)}:= \bar{\tau}_{G_{(12)}\langle g_{(12)}^{2}\rangle} = \bar{\tau}_{(1 \ 2)} = -1$.
\end{center}
 Therefore, we have $\tau_{(i \ j)} = -1$ for all $(i \ j) \in \mathbb{P}_{3}$. From Theorem \ref{Theorem 2.8}, the representation $\tau$ is strong. Hence, the permutation quandle $\mathbb{P}_{3}$ has three (up to equivalence) irreducible strong representations. 

\end{eg}

\begin{eg}
      Let $X=\{1,2,3\}$ be the rack given in  Example \ref{Example 2.7}. The operation of this rack is defined as $i \vartriangleright j = \sigma(j)$, for all $i,j \in X$, where $\sigma = (1 \ 2\ 3) \in \mathbb{S}_{3}$.The regular representation $\lambda: X \longrightarrow Conj(GL(\c X))$ is defined by

\begin{align*}
       \lambda_{1} = \lambda_{2}=\lambda_{3}=  \left[
             \begin{matrix}
              0 & 0 & 1 \\
              1 & 0 & 0\\
              0 & 1 & 0
             \end{matrix}
                   \right].
\end{align*}
Previously, we found the finite enveloping group of this rack, which is $\overline{G_{X}} = span \{ g_{1}\langle g_{1}^{3} \rangle\} \cong \z_{3}$. Therefore, $\lambda$ induces a group representation $\bar{\lambda}: \overline{G_{X}} \longrightarrow GL(3,\c)$ defined by:

\begin{align*}
       \bar{\lambda}_{g_{1}\langle g_{1}^{3} \rangle} = \bar{\lambda}_{g_{2}\langle g_{1}^{3} \rangle}= \bar{\lambda}_{g_{3}\langle g_{1}^{3} \rangle}=  \left[
             \begin{matrix}
              0 & 0 & 1 \\
              1 & 0 & 0\\
              0 & 1 & 0
             \end{matrix}
                   \right].
\end{align*}

The irreducible representations of $\overline{G_{X}} \cong \z_{3}$ are one dimensional and they are the cube roots of unity, that is $\bar{\rho}( g_{1}\langle g_{1}^{3} \rangle) = 1$, $\bar{\psi}( g_{1}\langle g_{1}^{3} \rangle) = w$, $\bar{\phi}( g_{1}\langle g_{1}^{3} \rangle) = w^{2}$ where $w= e^{2\pi i/3}$. The character table of this group representation is 

\begin{center}
\begin{tabular}{c|c|c|c}
 & ${\langle g_{1}^{3} \rangle}$ & ${g_{1}\langle g_{1}^{3} \rangle}$  & ${g_{1}^{2}\langle g_{1}^{3} \rangle}$\\ \hline
${\chi_{\bar{\rho}}}$ & 1 & 1 & 1 \\ \hline
${\chi_{\bar{\psi}}}$ & 1 & w & $w^{2}$ \\ \hline
${\chi_{\bar{\phi}}}$  & 1 & $w^{2}$ & $w$ \\
\hline
${\chi_{\bar{\lambda}}}$  & 3 & 0 & 0 

\end{tabular}
\end{center}

Where we have that $\bar{\lambda} \sim \bar{\rho} \oplus \bar{\psi} \oplus \bar{\phi}$. Now, the irreducible representations of $\overline{G_{X}}$ induces irreducible representations of the rack $X$ defined by 
\begin{center}
    $\rho_{i}= 1 \ \ \forall i \in X$,\\ $\psi_{i}= w \ \ \forall i \in X$, \\
    $\phi_{i}= w^{2} \ \ \forall i \in X$.
\end{center}
Therefore, we have that $\lambda \sim \rho \oplus \psi \oplus \phi$. The representation $\rho$ is the trivial one, so it is strong. We claim that the representations $\psi$ and $\phi$ are also strong. Indeed, let $\{i_{1},\dots,i_{k}\}$ be a stabilizing family of the rack $X$. Note that for every $j \in X$ we have that
\begin{align*}
   j &= i_{k} \vartriangleright ( \cdots \vartriangleright (i_{2} \vartriangleright (i_{1} \vartriangleright j)) \cdots) = i_{k} \vartriangleright ( \cdots \vartriangleright (i_{2} \vartriangleright \sigma(j) \cdots) \\
    &= i_{k} \vartriangleright ( \cdots (i_{3} \vartriangleright \sigma^{2}(j) \cdots) =\sigma^{k}(j)
\end{align*}
Therefore, $\sigma^{k}=id$. Since the order of $\sigma$ is 3, then $k$ is of the form $k=3m$, for some $m \in \n$. Then,
\begin{align*}
    \psi_{i_{k}} \cdots \psi_{i_{1}} = w^{k} = w^{3m} = 1\\
    \phi_{i_{k}} \cdots \phi_{i_{1}} = w^{2k} = w^{6m} = 1
\end{align*}
Thus, the representations are strong. Hence, the rack $X$ has 3 (up to equivalence) irreducible strong representations.

\end{eg}

Elhamdadi and Moutou, in \cite{Elhamdadi}, stated the theorem:

\textit{``Theorem 9.11: Every strong irreducible representation of a finite connected involutive rack is one–dimensional"}.  

The reader can see that Example \ref{Example 2.10} is a counterexample to this theorem. Indeed, the permutation quandle $\mathbb{P}_{3}$ is finite, connected and involutive, however it has one (up to equivalence) strong irreducible representation of dimension two. Furthermore, since  for every $k \in \n$  we have $\mathbb{S}_{k} \cong \overline{G_{\mathbb{P}_{k}}}$; then for all $k \in \n$, the permutation quandle $\mathbb{P}_{k}$, which is finite, involutive and connected, has at least one irreducible strong representation with dimension larger than one. Therefore, we can form an infinite family of strong representations that contradicts such theorem.


\begin{thebibliography}{00}
	
\bibitem{Algebraofknots} Elhamdadi, M. and Nelson, S., Quandles: An introduction to the Algebra of knots, Student mathematical library, volume 74, 2015, 978-1-4704-2213-4.

\bibitem{Algebraofknots2}  Elhamdadi, M. and Nelson, S.,
An introduction to the Algebra of knots   ,
Student mathematical library, volume 74   ,
2015, 978-1-4704-2213-4   


\bibitem{Joyce} Joyce, D., An algebraic approach to symmetry with applications to knot theory, PhD thesis, 1979, University of Pennsylvania.

\bibitem{Takasaki}
	Takasaki, M.   ,
	Abstraction of symmetric transformations   ,
	Tohoku Math   ,
	49 ,
	1943.
	   
	\bibitem{Elhamdadi}
	Elhamdadi, M. and Moutuou, E. M.,
	Finitely stable racks and rack representations,
	Communications in Algebra,
	46, 11, 4787--4802   ,
	2018, Taylor and Francis, "doi.org/10.1080/00927872.2018.1455102"
	\bibitem{Nosaka} Nosaka, T., Quandles and topological pairs: symmetry, knots and cohomology   , Springer, 2017, 978-981-10-6792-1, "https://doi.org/10.1007/978-981-10-6793-8"
	
	
	
	\bibitem{Automorphismgroup} Elhamdadi, M. and Macquarrie, J.and Restrepo,R., Automorphism groups of Quandles, Journal of Algebra and its Applications, 11, 01, 2012, World Scientific   , "https://doi.org/10.1142/S0219498812500089"    
	
	\bibitem{Joyce2} Joyce, D.,
	A classifying invariant of knots, the knot quandle   ,
	Journal of Pure and Applied Algebra   ,
	23,
	1982, 37--65, Elsevier   
	
	
	\bibitem{HopfAlgebras}
	Andruskiewitsch, N. and Graña, M.,
	From racks to pointed Hopf algebras,
	Advances in mathematics,
	178,
	2,
	177--243,
	2003,
	Elsevier 
	
	
	
	\bibitem{Topology}
	Nosaka, T.,
	Homotopical interpretation of link invariants from finite quandles,
	Topology and its applications,
	193,
	2015,
	Elsevier 
	
	
	
	\bibitem{Fromrackstoquandles}
	Szymik, M.,
	Permutations, power operations, and the center of
	the category of racks,
	Communications in Algebra,
	46,
	1,
	230-240,
	2018,
	Taylor \& Francis,
	"http://dx.doi.org/10.1080/00927872.2017.1316857"
	
	
	\bibitem{rackrings}
	Bardakov, V. and Passi, I. B. S. and Singh, M.,
	Quandle rings,
	Journal of Algebra and its applications,
	2019,
	18,
	08,
	World Scientific,
	"https://doi.org/10.1142/S0219498819501573"
	
	
	\bibitem{finitenveloping}
	Graña, M. and Heckenberger, I. and Vendramin, L.,
	Nichols algebras of group type with many quadratic relations,
	Advances in mathematics,
	2011,
	227,
	5,
	1956--1989,
	Elsevier,
	"https://doi.org/10.1016/j.aim.2011.04.006"
	
	
	\bibitem{g-digroups}
	Salazar-D  \'\i az, O. P. and Vel  \'a squez, R. and Wills-Toro, L. A.,
	Generalized digroups,
	Communications in Algebra,
	2016,
	44,
	7,
	2760--2785,
	Taylor \& Francis,
	"http://dx.doi.org/10.1080/00927872.2015.1065841"
	
	
	\bibitem{repgdrigroups}
	Rodr  \'\i guez-Nieto, J. G. and Salazar-D  \'\i az, O. P. and Vel  \'a squez, R.,
	The structure of g-digroup actions
	and representation theory,
	Algebra and Discrete Mathematics,
	2021,
	32,
	1,
	"https://doi.org/10.12958/adm1741"
	
	
	\bibitem{matveev1982distributive}
	title=  Distributive groupoids in knot theory,
	author=  Matveev, S. V.,
	1982,
	Mathematics of the USSR-Sbornik,
	47,
	number = 1,
	78-88 
	
	
	\bibitem{fFenn}
	Fenn, R. and Rourke, C.,
	Racks and links in codimension two,
	Journal of Knot Theory and Its Ramifications,
	1992,
	"https://doi.org/10.1016/j.aim.2011.04.006"
	
	
	\bibitem{structuregroups}
	Vendramin, L. and Lebed, V.,
	On Structure Groups of Set-Theoretic Solutions to the Yang–Baxter Equation,
	Proceedings of the Edinburgh Mathematical Society,
	2019,
	
	
	
	\bibitem{Coquesigrue}
	Kinyon, M.,
	Leibniz Algebras, Lie Racks, and Digroups,
	Journal of Lie Theory,
	2007,
	
	
	
	\bibitem{vendramin2012classification} Vendramin, L.,
	On the classification of quandles of low order,
	Journal of Knot Theory and Its Ramifications,
	21,
	09,
	1250088,
	2012,
	World Scientific 
	
	
	
	\bibitem{Ven}
	Pecios y Quandles, available to download from  https://leandrovendramin.org/notes/talca2015.pdf,
	Vendramin, L.,
	Online 
	
	
	\bibitem{GAPrig}
	Vendramin, L.,
	A GAP package for racks, quandles and Nichols algebras,
	https://github.com/gap-packages/rig/,
	Accessed on May, 2023 
	
	
	\bibitem{MR2286884}
	Kinyon, M. K.,
	Leibniz algebras, L ie racks, and digroups,
	J. Lie Theory,
	FJOURNAL = Journal of Lie Theory,
	17,
	2007,
	1,
	99--114,
	0949-5932,
	17A32 (20M99 22A30),
	2286884,
	Bogdan\ Balcerzak,
	
	
	\bibitem{MR3578405}
	Bordemann, M. and Wagemann, F.,
	Global integration of L eibniz algebras,
	J. Lie Theory,
	Journal of Lie Theory,
	27,
	2017,
	2,
	555--567,
	0949-5932,
	17A32 (22A30 22E05),
	3578405,
	Guy\ Roger\ Biyogmam,
	
	
	\bibitem{MR1722951}
	Etingof, P. and Schedler, T. and Soloviev, A.,
	Set-theoretical solutions to the quantum Y ang-  B axter
	equation,
	Duke Math. J.,
	Duke Mathematical Journal,
	100,
	1999,
	2,
	169--209,
	0012-7094,1547-7398,
	16W35 (81R50),
	1722951,
	E.\ J.\ Taft,
	10.1215/S0012-7094-99-10007-X,
	https://doi.org/10.1215/S0012-7094-99-10007-X,
	
	
	\bibitem{MR3868941}
	Lebed, V.,
	Applications of self-distributivity to Y ang-  B axter
	operators and their cohomology,
	J. Knot Theory Ramifications,
	Journal of Knot Theory and its Ramifications,
	27,
	2018,
	11,
	1843012, 20,
	0218-2165,1793-6527,
	16T25 (55N35 57M27),
	3868941,
	Gast\'  o n\ Andr\'  e s\ Garc\'  \i a,
	10.1142/S0218216518430125, https://doi.org/10.1142/S0218216518430125,
	
	
	\bibitem{lebed_vendramin_2019} 
	On Structure Groups of Set-Theoretic Solutions to the Yang–Baxter Equation, 
	62, 
	10.1017/S0013091518000548, 
	3, 
	Proceedings of the Edinburgh Mathematical Society, publisher=  Cambridge University Press, 
	Lebed, V. and Vendramin, L., 
	2019, 
	683–717  
	
\end{thebibliography}
\end{document}